\newtheorem{theorem}{Theorem}
\newtheorem{lemma}{Lemma}
\newtheorem{proposition}{Proposition}
\newtheorem{example}{Example}
\newtheorem{remark}{Remark}
\DeclareMathOperator{\logdens}{log\ dens} 
\author[D.  Kumar, S. Kumar and M. Saini]{ Dinesh Kumar, Sanjay Kumar and Manisha Saini}
\address{department of mathematics, deen dayal upadhyaya college, university of delhi, new delhi-110078, india.}
\email{dinukumar680@gmail.com}
\address{department of mathematics, deen dayal upadhyaya college, university of delhi, new delhi-110078, india.}
\email{skpant@ddu.du.ac.in}
\address{department of mathematics, university of delhi, delhi-110007, india.}
\email{sainimanisha210@gmail.com, msaini@maths.du.ac.in}
\thanks {The third author is Senior Research Fellow (UGC, Delhi).}
\keywords {entire function, meromorphic function, order of growth, complex differential equation}
\subjclass[2010]{34M10, 30D35}
\title[Solutions of Higher Order]{Solutions of Higher Order Linear Differential Equations}
\begin{document}
\setlength{\abovedisplayskip}{4pt}
\setlength{\belowdisplayskip}{4pt}
\maketitle
\begin{abstract}
We show that the higher order linear differential equation possesses all solutions of infinite order under certain conditions by extending the work of authors about second order differential equation \cite{dsm2}.
\end{abstract}
\section{Introduction}
For entire functions $A_{m-1}(z), \ldots, A_0(z)$ and $H(z)$, the differential equation 

\begin{equation}\label{sde3}
f^{(m)}+A_{m-1}(z)f^{(m-1)}+\ldots+A_0(z)f=H(z), m\geq3
\end{equation}
has entire functions as its solutions, where $A_0(z), H(z)\not\equiv 0$. If functions $A_{m-1}(z),\ldots, A_0(z)$ are polynomials and $H(z)$ is an entire function of finite order then all solutions of equation \eqref{sde3} have finite order. Therefore, if at least one of the coefficients is transcendental entire then a solution of infinite order of equation \eqref{sde3} exists. The associated homogeneous linear differential equation 
\begin{equation}\label{sde2}
f^{(m)}+A_{m-1}(z)f^{(m-1)}+\ldots+A_0(z)f=0
\end{equation}
has all non-trivial solutions of finite order if and only if all coefficients are polynomials \cite{lainebook}. It is well known that a solution of equation \eqref{sde3} is related to solution of equation \eqref{sde2}. The aim of this article is to find a necessary condition for the non-existence of solutions of finite order of equation \eqref{sde3}. Wang and Laine \cite{wanglaine}  proved that solutions of equation \eqref{sde3} are of infinite order when orders of coefficients $A_{m-1}(z), \ldots, A_0(z)$ are all equal. The authors have established certain conditions under which the associated homogeneous differential equation of \eqref{sde3} possesses all solutions having infinite order \cite{sm}. The main result of this paper is a generalization of Theorem 2 in \cite{dsm2} to higher order linear differential equations which we state below. We follow the notations $\rho(f),$ $\lambda(f)$ and $\rho_2(f)$ for order of growth, exponent of convergence and hyper-order of growth of entire function $f$ respectively, as used in \cite{dsm,sm,sm2,dsm2}.
\begin{theorem}\label{higthm}
Suppose that there exists a fixed integer $j\in \{1,2,\ldots, m-1\}$ such that $\lambda(A_j)<\rho(A_j)$, $A_0(z)$ is a transcendental entire function satisfying $\rho(A_0)\neq \rho(A_j)$ and $\max\{\rho(A_k):k=1,2,\ldots,m-1, k\neq j\}<\rho(A_0)$. Also, suppose that $H(z)$ is an entire function with $\rho(H)<\max\{\rho(A_0),\rho(A_j)\}$. Then all transcendental solutions $f$ of equation \eqref{sde3} satisfies
\begin{enumerate}[(a)]
\item $\rho(f)=\infty$
\item $\lambda(f)=\infty$
\item $\rho_2(f)= \max\{\rho(A_0),\rho(A_j)\}$ where $\max\{\rho(A_0),\rho(A_j)\}$ is a finite quantity.
\item For every $c\in \mathbb{C}$,  $\delta(c,f)=0$ and therefore, $f$ has no finite deficient value.
\end{enumerate}
\end{theorem}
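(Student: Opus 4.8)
My plan is to establish parts (a) and (c) together by pointwise growth estimates, and then to read off (b) and (d) from the resulting infinite order by Nevanlinna-theoretic identities. Since $\lambda(A_j)<\rho(A_j)$, Hadamard's factorization theorem forces $q:=\rho(A_j)$ to be a positive integer and gives $A_j=h\,e^{Q}$ with $\deg Q=q$ and $\rho(h)=\lambda(A_j)<q$; hence the plane is cut by the $2q$ critical rays $\Re Q=0$ into alternating sectors on which, off a set of small measure, $|A_j(z)|\ge\exp(\tfrac12\delta r^{q})$ (the ``positive'' sectors) or $|A_j(z)|\le\exp(-\tfrac12\delta r^{q})$ (the ``negative'' sectors), for some $\delta>0$. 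Writing $\beta=\max\{\rho(A_0),\rho(A_j)\}$, the hypotheses $\rho(A_0)\ne\rho(A_j)$ and $\max\{\rho(A_k):k\ne j\}<\rho(A_0)$ force a single coefficient to dominate all the others strictly, so I split into Case A, $\rho(A_0)>\rho(A_j)$ (here $A_0$ dominates, $\beta=\rho(A_0)$), and Case B, $\rho(A_j)>\rho(A_0)$ (here $A_j$ dominates, $\beta=\rho(A_j)=q$). Since $\beta\ge q>0$, the bound $\rho_2(f)\ge\beta$ will automatically give $\rho(f)=\infty$, so (a) is subsumed by the lower bound in (c); and when $\beta=\infty$ it is immediate, since the global inequality bounds the Nevanlinna characteristic of the dominant coefficient by a constant multiple of $T(r,f)$ plus terms of order $<\beta$. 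I therefore assume $\beta<\infty$ and use the factorization above.

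For the upper bound $\rho_2(f)\le\beta$ I evaluate \eqref{sde3} at the maximum modulus points $z_r$, where Wiman--Valiron theory gives $f^{(k)}(z_r)/f(z_r)=(\nu(r)/z_r)^{k}(1+o(1))$ and $|H(z_r)/f(z_r)|\le M(r,H)/M(r,f)$ is negligible; balancing the leading term against the coefficients bounds $\log\nu(r)$ by $r^{\beta+\epsilon}$, hence $\rho_2(f)\le\beta$. For the lower bound in Case A, I rewrite \eqref{sde3} as $A_0=-f^{(m)}/f-\sum_{k=1}^{m-1}A_k f^{(k)}/f+H/f$ and evaluate along a sequence on which $|A_0|=M(r,A_0)\ge\exp(r^{\rho(A_0)-\epsilon})$, estimating each $|f^{(k)}/f|$ by a fixed power of $T(2r,f)$ through Gundersen's lemma. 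The remaining coefficients contribute only $\exp(r^{\gamma+\epsilon})$ with $\gamma=\max_{k\ge1}\rho(A_k)<\rho(A_0)$, so $T(2r,f)$ is forced to exceed $\exp(c\,r^{\rho(A_0)-\epsilon})$ along the sequence, giving $\rho_2(f)\ge\rho(A_0)=\beta$. The delicate point is the term $H/f$; one arranges the evaluation points, as in \cite{dsm2}, so that $H/f$ remains negligible beside $A_0$, which is exactly where the hypothesis $\rho(H)<\rho(A_0)$ enters.

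Case B is the heart of the matter and the step I expect to be the main obstacle. Dividing \eqref{sde3} by $f$ and isolating $A_j$ gives, off the exceptional set, $|A_j(z)|\,|f^{(j)}/f|\le\sum_{k\ne j}|A_k|\,|f^{(k)}/f|+|H/f|$; on a positive sector, where $|A_j|\ge\exp(\tfrac12\delta r^{q})$ while every other coefficient has order $<q$, this forces $f^{(j)}/f$ to be super-exponentially small. The trouble is that the largeness of $A_j$ is multiplied by $f^{(j)}/f$ rather than appearing bare as $A_0$ did, so growth of $\nu(r)$ cannot be read off directly, and the maximum modulus points of $f$ need not lie in the positive sectors. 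I would proceed as in \cite{dsm2}: the super-exponential smallness of $f^{(j)}/f$ on a positive sector shows, after integrating along rays, that $f$ grows at most polynomially there, and a Phragm\'en--Lindel\"of comparison with the growth forced in the adjacent sectors both contradicts $\rho(f)<\infty$ and, made quantitative, yields points where $(\nu(r)/r)^{m-j}$ exceeds $\exp(\tfrac13\delta r^{q})$, whence $\log\nu(r)\ge c\,r^{q}$ and $\rho_2(f)\ge q=\beta$. It is precisely here that $\lambda(A_j)<\rho(A_j)$ is indispensable: without it the exponential factor of $A_j$ could be cancelled by its zeros and the sectors would carry no information.

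Parts (b) and (d) are comparatively soft and use $H\not\equiv0$. Solving \eqref{sde3} for $1/f$ gives $1/f=H^{-1}\big(f^{(m)}/f+\sum_{k=1}^{m-1}A_k f^{(k)}/f+A_0\big)$, so the lemma on the logarithmic derivative yields $m(r,1/f)\le m(r,1/H)+\sum_k m(r,A_k)+S(r,f)=O(r^{\beta+\epsilon})+S(r,f)$. If $\lambda(f)<\infty$ then $N(r,1/f)=O(r^{\lambda(f)+\epsilon})$, and $T(r,f)=m(r,1/f)+N(r,1/f)+O(1)$ would force $\rho(f)<\infty$, contradicting (a); hence $\lambda(f)=\infty$. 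For (d), fix $c\in\mathbb{C}$ and note that $f-c$ satisfies \eqref{sde3} with right-hand side $H-cA_0$. Whenever $H-cA_0\not\equiv0$ the same manipulation gives $m(r,1/(f-c))=O(r^{\beta+\epsilon})+S(r,f)$, which is $o(T(r,f))$ since $\rho(f)=\infty$; thus $\delta(c,f)=\liminf_r m(r,1/(f-c))/T(r,f)=0$. The identity $H-cA_0\equiv0$ can hold for at most one value $c_0$, and only in Case B (in Case A it would force $\rho(H)=\rho(A_0)=\beta$, contrary to hypothesis); for that single value $f-c_0$ solves the homogeneous equation \eqref{sde2} and is treated separately following the second order argument of \cite{dsm2}. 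Hence $f$ has no finite deficient value.
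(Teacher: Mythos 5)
Your overall architecture --- Hadamard factorization $A_j=he^{Q}$, Gundersen's estimates, a single strictly dominant coefficient, and Nevanlinna identities for (b) and (d) --- is the paper's, and several of your deviations are legitimate: deducing (a) from the lower bound $\rho_2(f)\ge\beta>0$ is sound; your Wiman--Valiron argument for the upper bound $\rho_2(f)\le\beta$ replaces the paper's route via variation of parameters ($f=\sum c_if_i$ with $f_i$ solving \eqref{sde2}) and Bernal's estimate (Lemma \ref{asslem}), and both work; and your observation in (d) that $H-cA_0\equiv 0$ can occur for one exceptional $c_0$ in Case B is a degenerate case the paper silently assumes away when it invokes Proposition \ref{mohon'ko}. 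Part (b) matches the paper's argument.

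The genuine gap is your Case B. You argue that on a sector where $|A_j|\ge\exp(\tfrac12\delta r^{q})$ the equation forces $f^{(j)}/f$ to be super-exponentially small, and that ``integrating along rays'' then gives polynomial growth of $f$ there, to be contradicted by Phragm\'en--Lindel\"of. For $j\ge 2$ the integration step fails outright: $f^{(j)}/f$ is not the logarithmic derivative of anything, and its smallness cannot be integrated to bound $|f|$ (any polynomial of degree $<j$ has $f^{(j)}\equiv 0$, so no growth information is recoverable from $f^{(j)}/f$ alone). In addition, the pointwise inequality you start from is unavailable throughout a sector, since $|H/f|$ blows up near the zeros of $f$, and your quantitative claim that one finds points with $(\nu(r)/r)^{m-j}\ge\exp(\tfrac13\delta r^{q})$ has no stated mechanism. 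The paper's device is the opposite of yours and avoids all of this: it evaluates \eqref{sde3} only at maximum-modulus points $z_q$ of $f$, where Wiman--Valiron (Lemma \ref{lemm} and estimate \eqref{prelim2}) gives the \emph{lower} bound $|f^{(j)}(z_q)/f(z_q)|\ge (1+o(1))r_q^{-j}$; when the limit direction $\theta_0$ of the $\theta_q$ has $\delta(P,\theta_0)>0$, the product $|A_j(z_q)|\,|f^{(j)}(z_q)/f(z_q)|$ then exceeds $\exp\{\tfrac12(1-\epsilon)\delta(P,\theta_0)r_q^{n}\}r_q^{-j}$ and cannot be balanced by the remaining terms, which are at most $r_q^{O(1)}\exp(r_q^{\eta})$ with $\eta<n$; the cases $\delta(P,\theta_0)\le 0$ are handled by perturbing the angle with the help of Lemma \ref{wale}, not by a sector comparison. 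A smaller instance of the same difficulty sits in your Case A: at a maximum-modulus point of $A_0$ the quotient $H/f$ need not be small (there $f$ may vanish), which is precisely why the paper anchors the evaluation at maximum-modulus points of $f$; you flag this as ``the delicate point'' but do not resolve it.
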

\begin{remark}
Under the hypothesis of Theorem \ref{higthm}, equation \eqref{sde3} may possesses non-constant polynomial solutions. Also, by order consideration of an entire function, we obtain that all non-constant polynomial solutions of equation \eqref{sde3} are of degree less than $j$, where $j\in \{1,2, \ldots,m-1\}$ is fixed in Theorem \ref{higthm}. However, when $j=1$, then equation \eqref{sde3} has no polynomial solution . 
 \end{remark}
 
 The following examples justify that the conditions in the hypothesis of Theorem \ref{higthm}(a) cannot be relaxed.

\begin{example}\label{higeg1}
The finite order function $f(z)=e^{-z}$ satisfies the linear differential equation
\begin{equation*}
f'''+e^zf''-f'-(e^z-1)f=e^{-z}
\end{equation*}
\end{example}
Here we have, $\rho(A_k)<\rho(A_0)=\rho(A_j)$ and $\rho(H)=\max\{ \rho(A_0),\rho(A_j)\}$ for $k=1$ and $j=2$, which shows that hypothesis in Theorem \ref{higthm} are necessary.
\begin{example}\label{higeg2}
The differential equation 
\begin{equation*}
f'''-2zf''+e^zf'-(2ze^z-1)f=(8z+1)e^{z^2}
\end{equation*}
is satisfied by the finite order function $f(z)=e^{z^2}$.
\end{example}
Here we have, $\rho(A_k)<\rho(A_0)=\rho(A_j)$ and $\rho(H)>\max\{\rho(A_0),\rho(A_j)\}$ for $k=2$ and $j=1$, which also implies that hypothesis of Theorem \ref{higthm} are necessary.
\begin{example}\label{higeg3}
The linear differential equation
\begin{equation*}
f^{(iv)}+f'''-e^zf''-f'+(2e^z-1)f=1
\end{equation*}
has a finite order solution $f(z)=e^{-z}$, where $\rho(A_k)<\rho(A_0)=\rho(A_j)$ and $\rho(H)<\max\{\rho(A_0),\rho(A_j)\}$ for $k=1,3$ and $j=2$.
\end{example}
 
\begin{example}\label{higeg4}
The finite order function $f(z)=e^{-z}$ is a solution of linear differential equation
\begin{equation*}
f'''+(e^z-1)f''+e^{z^2}f'+e^zf=1-e^{z^2-z}
\end{equation*}
where $\rho(A_j)\neq \rho(A_0)=\rho(A_k)$ and $\rho(H)=\max\{\ \rho(A_0),\rho(A_j)\}$  for $k=2$ and $j=1$.
\end{example}
\begin{example}\label{higeg5}
The differential equation
\begin{equation*}
f'''+(e^{z^2}+1)f''-e^zf'-(e^{z^2}-e^z)f=2
\end{equation*}
has a finite order solution $f(z)=e^{-z},$ where $\rho(A_j)\neq \rho(A_0)=\rho(A_k)$ and $\rho(H)<\max\{\rho(A_0),\rho(A_j)\}$ for $k=2$ and $j=1$.
\end{example}
\begin{example}\label{higeg6}
The differential equation
\begin{equation*}
f'''+f''+e^zf'+\cos{z^{\frac{1}{2}}}f=(6+6z+3z^2e^z+z^3\cos{z^{\frac{1}{2}}})
\end{equation*}
is satisfied by the polynomial $f(z)=z^3$ and $\rho(A_k)<\rho(A_0)$ for $k=2$ and  $\rho(A_0)\neq \rho(A_j)$ and $\rho(H)=\max\{\rho(A_0),\rho(A_j)\}$ for $j=1.$
\end{example}
\begin{example}\label{higeg7}
The function $f(z)=e^{z^2}$ is a finite order solution of the differential equation
\begin{equation*}
f'''+e^{-z}f''+f'-(4z^2+2)e^{-z}f=(14z+8z^3)e^{z^2}
\end{equation*}
where $\rho(A_k)<\rho(A_0)=\rho(A_j)$ and $\rho(H)>\max\{\rho(A_0),\rho(A_j)\}$ for $k=1$ and $j=2$.
\end{example}
\section{Auxiliary Results}
This section is devoted to the known results which will be useful in proving the main theorem. For a subset $E\subset(1,\infty)$, $m(E), m_l(E), \overline{\logdens}(E)$ and $\underline{\logdens}(E)$ denotes the linear measure, logarithmic measure, upper logarithmic density and lower logarithmic density respectively. 

The following lemma of Gundersen \cite{log gg} provides estimates for a meromorphic function outside a set of finite logarithmic measure.
\begin{lemma}\label{gunlem}
Let $f$ be a meromorphic function and let $\Gamma=\{(k_1,j_1),\ldots, (k_p,j_p)\}$ be the set of distinct pairs of integers such that $k_t>j_t\geq0$ for $t=1,2,\ldots, p$. Let $\alpha>1$ and $\epsilon>0$ be given real constants. Then there exists $E\subset(1,\infty)$ satisfying $m_l(E)<\infty$ and a constant $c>0$ depending on $\alpha$ and $\Gamma$ such that 
\begin{equation} 
\left| \frac{f^{(k)}(z)}{f^{(j)}(z)}\right|\leq c \left( \frac{T(\alpha r,f)}{r} \log^{\alpha}{r} \log{T(\alpha r,f)} \right)^{(k-j)}
\end{equation}
Moreover, if $f(z)$ is of finite order then $f(z)$ satisfies:
\begin{equation} \label{guneq1}
\left|\frac{f^{(k)}(z)}{f^{(j)}(z)}\right| \leq |z|^{(k-j)(\rho(f)-1+\epsilon)}
\end{equation} 
 for all $z$ satisfying $|z|\notin E\cup [0,1]$ and $|z| \geq R_0$ and for all $(k,j)\in \Gamma$.
\end{lemma}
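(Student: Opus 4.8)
The plan is to follow Gundersen's original route, combining the Poisson--Jensen representation of $\log f$ with a careful excision of radii near the zeros and poles of $f$. It is cleanest to first prove the estimate for pairs of the form $(s,0)$, i.e.\ to bound $|f^{(s)}(z)/f(z)|$, and then recover the general pair $(k,j)$ by applying the $j=0$ result to the meromorphic function $g=f^{(j)}$, writing $f^{(k)}/f^{(j)}=g^{(k-j)}/g$ and using the classical growth comparison $T(r,f^{(j)})\le (j+1)T(r,f)+S(r,f)$ to trade $T(\cdot,g)$ for $T(\cdot,f)$; the resulting change of argument $r\mapsto\alpha r$ and of $\alpha$ is absorbed into the stated constant $c$. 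Since the top-degree term of $f^{(s)}/f$ is $(f'/f)^{s}$ and the lower terms are differential monomials of lower total degree in $f'/f$ and its derivatives, it suffices to bound $|(f'/f)^{(\ell)}|$ for $0\le \ell\le s-1$; raising the basic bound for $|f'/f|$ to the power $s=k-j$ then produces exactly the exponent $(k-j)$ in the conclusion.

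The second step is to differentiate the Poisson--Jensen formula. Writing $R=\alpha r$, with zeros $a_\mu$ and poles $b_\nu$ of $f$ in $|w|<R$, the logarithmic derivative has the representation
$$\frac{f'(z)}{f(z)}=\frac{1}{2\pi}\int_0^{2\pi}\log|f(Re^{i\phi})|\,\frac{2Re^{i\phi}}{(Re^{i\phi}-z)^2}\,d\phi+\sum_{\mu}\Big(\frac{1}{z-a_\mu}+\frac{\overline{a_\mu}}{R^2-\overline{a_\mu}z}\Big)-\sum_{\nu}\Big(\frac{1}{z-b_\nu}+\frac{\overline{b_\nu}}{R^2-\overline{b_\nu}z}\Big).$$
Differentiating $\ell$ more times raises the boundary kernel to order $(\ell+2)$ in $(Re^{i\phi}-z)$ and turns the rational sums into $\sum_\mu (z-a_\mu)^{-(\ell+1)}$ and $\sum_\nu (z-b_\nu)^{-(\ell+1)}$ (times factorials), plus regular pieces. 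The boundary term is handled by the crude bound $|Re^{i\phi}-z|\ge R-r$ and Jensen's formula,
$$\Big|\text{(boundary contribution)}^{(\ell)}\Big|\le \frac{C R}{(R-r)^{\ell+2}}\int_0^{2\pi}\big|\log|f(Re^{i\phi})|\big|\,d\phi\le \frac{C' R}{(R-r)^{\ell+2}}\,T(R,f),$$
so that with $R-r=(\alpha-1)r$ this is $\lesssim T(\alpha r,f)/r^{\ell+1}$. The regular pieces $\overline{a_\mu}/(R^2-\overline{a_\mu}z)$ and their pole analogues are each $O(1/R)$, and their number is $n(R,f)+n(R,1/f)\le \frac{C}{\log\alpha}\big(N(\alpha R,f)+N(\alpha R,1/f)\big)\le C''\,T(\alpha R,f)$ by the first fundamental theorem, contributing $\lesssim T(\alpha R,f)/R$ as well.

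The main obstacle is the singular part, namely bounding $\sum_\mu |z-a_\mu|^{-(\ell+1)}+\sum_\nu|z-b_\nu|^{-(\ell+1)}$ uniformly for every $z$ on the circles $|z|=r$ with $r$ outside the exceptional set. Listing the combined zeros and poles as $\{c_n\}$, I would excise from $(1,\infty)$ those radii $r$ for which $\{|w|=r\}$ passes within a prescribed distance of some $c_n$, choosing the widths of the excised bands so that $\sum_n(\text{width})/|c_n|<\infty$, which forces $m_l(E)<\infty$. Off $E$ one then gets a lower bound for each $|z-c_n|$, and the reciprocal-distance sum is controlled by the counting estimate $n(t)\le C\,T(\alpha t,f)$. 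Balancing the band widths against this counting function is precisely what injects the extra factors $\log^{\alpha}r$ and $\log T(\alpha r,f)$, yielding
$$\sum_{\mu}\frac{1}{|z-a_\mu|^{\ell+1}}+\sum_{\nu}\frac{1}{|z-b_\nu|^{\ell+1}}\le\Big(\frac{T(\alpha r,f)}{r}\,\log^{\alpha}r\,\log T(\alpha r,f)\Big)^{\ell+1}.$$
I expect this excision-and-counting argument, together with verifying $m_l(E)<\infty$, to be the delicate heart of the proof; assembling the three bounds into $|f^{(s)}/f|\le c\big(\tfrac{T(\alpha r,f)}{r}\log^{\alpha}r\,\log T(\alpha r,f)\big)^{s}$ and then into the $(k,j)$ statement is routine by the reduction above. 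Finally, the ``moreover'' assertion is immediate: if $\rho(f)=\rho<\infty$ then $T(\alpha r,f)\le(\alpha r)^{\rho+\epsilon'}$ for $r\ge R_0$, while $\log^{\alpha}r$ and $\log T(\alpha r,f)$ are $\le r^{\epsilon''}$; substituting into the first inequality and absorbing $c$ and the logarithmic factors into an arbitrarily small $\epsilon$ gives $|f^{(k)}(z)/f^{(j)}(z)|\le |z|^{(k-j)(\rho(f)-1+\epsilon)}$ for $|z|\notin E\cup[0,1]$, $|z|\ge R_0$.
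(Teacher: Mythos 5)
The paper does not prove this lemma at all: it is quoted verbatim from Gundersen \cite{log gg} as a known auxiliary result, so the only meaningful comparison is with Gundersen's original argument. Your sketch reconstructs essentially that argument --- the differentiated Poisson--Jensen representation with boundary, regular, and singular parts; the reduction of a general pair $(k,j)$ to the case $(k-j,0)$ applied to $g=f^{(j)}$ using $T(r,f^{(j)})\leq (j+1)T(r,f)+S(r,f)$; and the excision of radial bands of summable logarithmic width around the moduli of the zeros and poles, whose balancing against the counting function $n(t)\leq C\,T(\alpha t,f)$ produces the $\log^{\alpha}r$ and $\log T(\alpha r,f)$ factors --- and, granting that the band-width bookkeeping in the singular sum is only described rather than executed, it is a correct outline of the standard proof.
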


The next lemma is used to establish estimates for a transcendental entire function.
\begin{lemma}\label{implem}  \cite{banklang}
Let $A(z)=v(z)e^{P(z)}$ be an entire function, where $P(z)$ is a polynomial of degree $n$ and $v(z)$ is an entire function of order less than $n$. Then for every $\epsilon>0$ there exists $E \subset [0,2\pi)$ of linear measure zero such that

\begin{enumerate}[(i)]

\item for $ \theta \in [0,2\pi) $ with $\delta(P,\theta)>0$, there exists $ R>1 $ satisfying
\begin{equation}\label{eqA1}
 \exp \{ (1-\epsilon) \delta(P,\theta)r^n \} \leq|A(re^{\iota \theta})| \leq  \exp \{ (1+\epsilon) \delta(P,\theta)r^n \}
\end{equation}
for $r>R;$

\item for $\theta \in [0,2\pi)$ with $\delta(P,\theta)<0$, there exists $R>1$ satisfying 
\begin{equation}\label{eq2le}
 \exp \{(1+\epsilon) \delta(P,\theta)r^n \} \leq |A(re^{\iota \theta})| \leq \exp \{ (1-\epsilon)\delta(P,\theta) r^n \} 
\end{equation}
for $r>R.$
\end{enumerate}
\end{lemma}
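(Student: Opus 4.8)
The plan is to pass to polar coordinates and split $A$ into its dominant exponential factor and the subordinate factor $v$. Writing $z=re^{\iota\theta}$ gives $|A(re^{\iota\theta})|=|v(re^{\iota\theta})|\exp\{\operatorname{Re}P(re^{\iota\theta})\}$, hence $\log|A(re^{\iota\theta})|=\operatorname{Re}P(re^{\iota\theta})+\log|v(re^{\iota\theta})|$. If $P(z)=a_nz^n+\cdots+a_0$ with $a_n=|a_n|e^{\iota\psi}$, a direct expansion yields $\operatorname{Re}P(re^{\iota\theta})=|a_n|r^n\cos(n\theta+\psi)+O(r^{n-1})=\delta(P,\theta)r^n+o(r^n)$, the error being uniform in $\theta$; this is exactly the quantity $\delta(P,\theta)$ appearing in the statement. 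Thus the whole assertion reduces to showing that $\log|v(re^{\iota\theta})|$ is $o(r^n)$ for every direction outside a set of linear measure zero.

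Fix $\beta$ with $\rho(v)<\beta<n$. The upper bound $\log|v(re^{\iota\theta})|\le\log M(r,v)\le r^{\beta}=o(r^n)$ holds for all large $r$, uniformly in $\theta$, directly from $\rho(v)<n$. The delicate point, and the main obstacle, is the matching lower bound, since $\log|v|\to-\infty$ at the zeros of $v$. Here I would use the Hadamard factorization $v(z)=z^q e^{g(z)}\prod_k E(z/z_k,p)$, where $g$ is a polynomial of degree at most $\beta$, the genus $p$ is at most $\beta$, and $\sum_k|z_k|^{-\beta}<\infty$. The monomial and exponential factors contribute only $O(r^{\beta})$ to $\log|v|$, while a minimum-modulus estimate for the canonical product gives $\log|\prod_k E(z/z_k,p)|\ge -r^{\beta}$ as long as $z$ is kept away from the zeros. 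Collecting the bad directions so produced into a set $E\subset[0,2\pi)$ of linear measure zero yields $\log|v(re^{\iota\theta})|\ge -r^{\beta}=o(r^n)$ for $\theta\notin E$ and $r$ large, hence $|\log|v(re^{\iota\theta})||=o(r^n)$ off $E$.

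Finally I would combine the two estimates. For $\theta\notin E$ with $\delta(P,\theta)\ne 0$ we get $\log|A(re^{\iota\theta})|=\delta(P,\theta)r^n+o(r^n)$, so $|\log|A(re^{\iota\theta})|-\delta(P,\theta)r^n|\le\epsilon|\delta(P,\theta)|r^n$ for all $r$ beyond some $R=R(\theta,\epsilon)>1$. Exponentiating gives \eqref{eqA1} when $\delta(P,\theta)>0$; when $\delta(P,\theta)<0$ the inequalities reverse under exponentiation because $\exp\{\delta(P,\theta)r^n\}$ then decays, producing \eqref{eq2le}. The only genuinely hard step is the uniform lower bound for $|v|$ away from its zeros; once the measure-zero angular exceptional set is constructed there, the rest is bookkeeping with the $(1\pm\epsilon)$ factors.
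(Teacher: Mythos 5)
The paper offers no proof of this lemma; it is quoted verbatim from the cited source \cite{banklang}, so there is nothing internal to compare against. Your outline is the standard argument behind that result and is sound: $\operatorname{Re}P(re^{\iota\theta})=\delta(P,\theta)r^{n}+O(r^{n-1})$ uniformly in $\theta$, the upper bound on $\log|v|$ is immediate from $\rho(v)<n$, and the whole content sits in the lower bound for $|v|$ along rays. The one step you leave implicit --- that the ``bad directions'' form a set of linear measure zero --- is exactly where the work is: one surrounds each zero $z_k$ of $v$ by a disc of radius $|z_k|^{-\beta}$, notes that a ray $\arg z=\theta$ meets that disc only if $\theta$ lies in an arc of length comparable to $|z_k|^{-\beta-1}$, and uses $\sum_k|z_k|^{-\beta-1}<\infty$ together with a Borel--Cantelli argument to conclude that the set of $\theta$ whose ray meets infinitely many discs has measure zero; off that set the minimum-modulus estimate for the canonical product applies for all large $r$, and the rest is the bookkeeping you describe.
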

The following lemma gives upper bound for solutions of equation \eqref{sde2}.
\begin{lemma}\cite{ber}\label{asslem}
Suppose that $\rho(A_k)\leq \rho'<\infty$ for all $k=0,1,\ldots, m-1$. If $f$ is a solution of equation \eqref{sde2} then $\rho_2(f)\leq\rho'$.
\end{lemma}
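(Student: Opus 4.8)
The plan is to bound the growth of a solution through its central index by means of Wiman--Valiron theory, which is the standard tool for passing from a linear differential equation to a hyper-order estimate. We may assume that $f$ is transcendental of infinite order, since otherwise $\rho_2(f)=0\leq\rho'$ and nothing has to be shown.

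First I would fix $\epsilon>0$. Since $\rho(A_k)\leq\rho'$ for every $k$, the definition of the order of growth yields $|A_k(z)|\leq\exp\{r^{\rho'+\epsilon}\}$ for $|z|=r\geq R_0$, simultaneously for $k=0,1,\ldots,m-1$. Let $\nu(r)$ denote the central index of $f$, and for each $r$ choose $z_r$ with $|z_r|=r$ and $|f(z_r)|=M(r,f)$; note $f(z_r)\neq0$ for large $r$. By the Wiman--Valiron theorem there is a set $E\subset(1,\infty)$ of finite logarithmic measure such that
\begin{equation*}
\frac{f^{(k)}(z_r)}{f(z_r)}=\left(\frac{\nu(r)}{z_r}\right)^{k}(1+o(1)),\qquad k=1,2,\ldots,m,
\end{equation*}
holds as $r\to\infty$ with $r\notin E$. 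Evaluating \eqref{sde2} at $z_r$, dividing by $f(z_r)$, and inserting these asymptotics gives, after writing $N=\nu(r)/r$ and taking moduli (the factors $1+o(1)$ being bounded away from $0$ and $\infty$ for large $r$),
\begin{equation*}
N^{m}\leq C\exp\{r^{\rho'+\epsilon}\}\sum_{k=0}^{m-1}N^{k}\leq Cm\exp\{r^{\rho'+\epsilon}\}\max\{1,N^{m-1}\}.
\end{equation*}

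The decisive point is the dominant--balance argument hidden in the last display: whether $N\geq1$ or $N<1$, dividing by the appropriate power of $N$ forces $\nu(r)/r\leq C'\exp\{r^{\rho'+\epsilon}\}$, and hence $\nu(r)\leq\exp\{r^{\rho'+2\epsilon}\}$ for all large $r\notin E$. To pass from this to a bound valid for every $r$, I would use that $\nu$ is nondecreasing and that $E$ has finite logarithmic measure: for each large $r$ the interval $[r,2r]$ meets the complement of $E$, so choosing $r'\in[r,2r]\setminus E$ gives $\nu(r)\leq\nu(r')\leq\exp\{(2r)^{\rho'+2\epsilon}\}\leq\exp\{r^{\rho'+3\epsilon}\}$.

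Finally, applying $\log\log$ to the bound $\nu(r)\leq\exp\{r^{\rho'+3\epsilon}\}$ yields $\tfrac{\log\log\nu(r)}{\log r}\leq\rho'+3\epsilon$ for all large $r$. Combining this with the standard identity $\rho_2(f)=\limsup_{r\to\infty}\tfrac{\log\log\nu(r)}{\log r}$ gives $\rho_2(f)\leq\rho'+3\epsilon$, and letting $\epsilon\to0$ finishes the proof. I expect the main obstacle to be the dominant--balance step together with the uniform control of the $1+o(1)$ terms coming from Wiman--Valiron, and the subsequent removal of the exceptional set $E$ via monotonicity of the central index; the coefficient estimate and the final logarithmic bookkeeping are routine.
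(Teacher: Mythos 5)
Your argument is essentially correct. Note first that the paper offers no proof of this lemma at all: it is quoted verbatim from Bernal's paper \cite{ber}, so there is nothing internal to compare against. Your Wiman--Valiron route is the standard way such results are proved in this literature, and the key steps all hold up: the coefficient bound $|A_k(z)|\leq\exp\{r^{\rho'+\epsilon}\}$ is just the definition of order; the asymptotics $f^{(k)}(z_r)/f(z_r)=(\nu(r)/z_r)^k(1+o(1))$ at a maximum-modulus point outside a set of finite logarithmic measure are exactly the content of Lemma 5 of the paper (quoted from Laine's book); the dominant-balance step is sound because $\sum_{k=0}^{m-1}N^k\leq m\max\{1,N^{m-1}\}$ and the case $N<1$ gives $\nu(r)<r$, which is even better than needed; and the removal of the exceptional set via monotonicity of $\nu$ and the fact that $[r,2r]$ cannot lie entirely in a set of finite logarithmic measure for arbitrarily large $r$ is a routine but correctly identified point. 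The only ingredient you invoke without proof is the identity $\rho_2(f)=\limsup_{r\to\infty}\log\log\nu(r,f)/\log r$, which is a standard consequence of the comparison between $\nu(r,f)$ and $\log M(r,f)$ and is fair to cite. Two cosmetic remarks: the reduction at the start should read ``finite order implies $\rho_2(f)=0$'' rather than singling out infinite order as the only nontrivial case (which is what you in effect do), and Bernal's original argument, as well as the common alternative, can also be run by converting \eqref{sde2} to a first-order system and applying a Gronwall-type estimate to get $\log^+M(r,f)=O\bigl(r\max_k M(r,A_k)\bigr)$ directly; your version is no less rigorous and arguably cleaner.
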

The next lemma provides a lower bound for modulus of an entire function in a neighborhood of a particular $\theta\in [0,2\pi).$
\begin{lemma}\label{wale}\cite{junwang} 
Suppose $f(z)$ is an entire function of finite order $\rho(f)$ and $M(r,f)=|f(re^{\iota \theta_r})|$ for every $r$. Given $\zeta>0$ and $0<C(\rho(f),\zeta)<1$ there exists $0<l_0<1/2$ and a set $S\subset (1,\infty)$ with $\underline{\log dens}(S)\geq1-\zeta$ such that 
\begin{equation}
e^{-5\pi}M(r,f)^{1-C}\leq |f(re^{\iota \theta})|
\end{equation}
for all sufficiently large $r\in S$  and for all $\theta$ satisfying $|\theta-\theta_r|\leq l_0$.
\end{lemma}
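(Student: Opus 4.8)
\emph{Setup and strategy.} The plan is to represent $\log|f|$ on a slightly larger disc by the Poisson--Jensen formula, to transport the large value $\log M(r,f)$ from the maximum point $z_0=re^{\iota\theta_r}$ to a nearby point $z=re^{\iota\theta}$ by a quantitative Harnack inequality for the harmonic part, and to control the Blaschke (zero) part by discarding bad radii; the surviving radii form the set $S$. Throughout I write $z_0=re^{\iota\theta_r}$, so $\log|f(z_0)|=\log M(r,f)$, and $z=re^{\iota\theta}$ with $|\theta-\theta_r|\le l_0$.

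\emph{Splitting off the harmonic part.} First I would fix a small $s>0$, set $R=(1+s)r$, and write, for $u=\log|f|$,
\[
\log|f(z)| = h(z) + B(z),\qquad h(z)=\frac{1}{2\pi}\int_0^{2\pi}\frac{R^2-|z|^2}{|Re^{\iota\phi}-z|^2}\,\log|f(Re^{\iota\phi})|\,d\phi,
\]
where $B(z)=\sum_{|a_k|<R}\log\bigl|\tfrac{R(z-a_k)}{R^2-\bar a_k z}\bigr|\le 0$ collects the zeros $a_k$ of $f$. Since $u(Re^{\iota\phi})\le\log M(R,f)$ and the Poisson kernel reproduces constants, $H:=\log M(R,f)-h$ is a nonnegative harmonic function on $D(0,R)$. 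At $z_0$ the Blaschke factors give $B(z_0)\le 0$, hence $h(z_0)\ge\log M(r,f)$.

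\emph{Harnack transport.} Because $z$ and $z_0$ lie on $|w|=r$ with angular separation at most $l_0$, their hyperbolic distance in $D(0,R)$ is of order $l_0/s$; Harnack's inequality applied to $H$ then yields $H(z)\le(1+C_1 l_0/s)\,H(z_0)$, which rearranges to
\[
h(z)\ge h(z_0)-\tfrac{C_1 l_0}{s}\bigl(\log M(R,f)-\log M(r,f)\bigr)\ge \log M(r,f)-\tfrac{C_1 l_0}{s}\,\Delta(r,s),
\]
with $\Delta(r,s)=\log M((1+s)r,f)-\log M(r,f)$. For a regular radius one has $\Delta(r,s)\le C_2\,s\,\rho(f)\log M(r,f)$, so choosing $l_0$ small (depending on $\rho(f)$ and the target $C$) forces $h(z)\ge(1-\tfrac{C}{2})\log M(r,f)$; the radii where the regularity of $\log M(\cdot,f)$ fails form a set of small upper logarithmic density by a Borel-type growth lemma for the increasing function $\log M(\cdot,f)$.

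\emph{Zeros, the density bound, and the main obstacle.} By Jensen's formula $n(R,1/f)=O(\log M(r,f))$ on regular radii, so away from the zeros the aggregate of the Blaschke factors costs at most a further $\tfrac{C}{2}\log M(r,f)$ once $s$ is small, while the factor from the closest zeros yields the universal additive constant producing $e^{-5\pi}$. The decisive point is that the bound must hold for \emph{every} $\theta$ with $|\theta-\theta_r|\le l_0$, i.e.\ along the whole arc $\{re^{\iota\theta}\}$, not at a single point; I would therefore discard those radii $r$ for which some zero of $f$ approaches this arc within the critical distance, bounding the logarithmic density of such radii by the density of the radial projection of the zero set, controlled through $\int^{R}n(t,1/f)\,t^{-1}\,dt=O(R^{\rho(f)+\epsilon})$. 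Together with the irregular-growth radii this exceptional set has upper logarithmic density at most $\zeta$, so its complement $S$ satisfies $\underline{\logdens}(S)\ge 1-\zeta$, and on $S$ the lower bounds for $h(z)$ and $B(z)$ combine to give $\log|f(re^{\iota\theta})|\ge(1-C)\log M(r,f)-5\pi$. I expect this uniform zero-avoidance along a fixed, non-shrinking arc to be the main obstacle: the local Wiman--Valiron approximation $f(z)\sim(z/z_0)^{\nu(r)}f(z_0)$ is valid only in a shrinking neighbourhood of $z_0$ and does not reach across the arc, so the zero contribution must be dominated by a global estimate while keeping the discarded radii within logarithmic density $\zeta$.
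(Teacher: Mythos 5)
First, note that the paper offers no proof of this lemma at all: it is quoted verbatim from Wang and Laine \cite{junwang}, so your attempt must stand entirely on its own. The harmonic half of your sketch is essentially sound: the Poisson--Jensen decomposition on $|w|=(1+s)r$, the nonnegativity of $H=\log M(R,f)-h$ (using $B(z_0)\le 0$ to get $h(z_0)\ge\log M(r,f)$), the Harnack transport at cost $(C_1l_0/s)\Delta(r,s)$, and the Borel-type regularity $\Delta(r,s)\le C_2\,s\,\rho(f)\log M(r,f)$ outside a set of upper logarithmic density $O(1/C_2)$ all go through by the standard covering argument applied to the increasing function $t\mapsto\log\log M(e^t)\le(\rho(f)+\epsilon)t$ (with the minor caveat that the factor $s\rho(f)$ degenerates when $\rho(f)=0$ and must be replaced).

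The genuine gap is exactly where you yourself place it, and your proposed repair does not close it. You need $-B(re^{\iota\theta})\le\tfrac{C}{2}\log M(r,f)+5\pi$ simultaneously for \emph{all} $\theta$ in a fixed, non-shrinking arc, and you propose to discard the radii where a zero approaches the arc, controlling the discarded set ``through $\int^{R}n(t,1/f)\,t^{-1}\,dt=O(R^{\rho(f)+\epsilon})$.'' That bound yields no density conclusion. In the critical case $\lambda(f)=\rho(f)$, Jensen gives $n(R,1/f)\asymp\log M(r,f)$ on regular radii, so your total budget of $\tfrac{C}{2}\log M(r,f)$ leaves only an $O(1)$ allowance per zero; since a single Blaschke factor obeys $\log|B_a(z)|^{-1}\approx\log\bigl(sR/|z-a|\bigr)$, an $O(1)$ allowance forces $|z-a|\ge cR$ for a fixed $c>0$, i.e.\ each offending zero near the arc forbids an annulus of radii of \emph{constant relative width} -- constant length in the logarithmic scale. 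Nothing in your integral bound prevents a geometric subsequence of the $n(t)\asymp t^{\rho(f)}$ available zeros from sitting inside the arc's angular sector at every scale, in which case your discarded set has upper logarithmic density $1$, not at most $\zeta$. Closing this step requires a different mechanism -- for instance a Cartan-type lower bound for $\prod_k|z-a_k|$ holding on the whole arc, with exceptional discs whose radial projection has small logarithmic density, the constant $C$ then emerging as a function of $\rho(f)$ and $\zeta$ exactly as the statement's $C(\rho(f),\zeta)$ permits -- or the harmonic-measure estimates of the original source. Relatedly, the additive constant $e^{-5\pi}$ is asserted (``the factor from the closest zeros yields the universal additive constant'') but never derived; it is a specific output of the cited proof, not something your bookkeeping produces. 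Flagging the obstacle honestly, as you do, does not substitute for resolving it: as written the proposal proves a weaker statement in which the exceptional set of radii is uncontrolled.
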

The following result is from \cite{lainebook} and includes the central index of an entire function.
\begin{lemma}\label{lemm}
Let $f$ be a transcendental entire function, $\delta\in (0,1/4)$ and $z$ be such that $|z|=r$ and that
\begin{equation}\label{prelim1}
|f(z)|>M(r,f)\nu(r,f)^{-\frac{1}{4}+\delta}
\end{equation}
holds. Then there exists a set $F\subset (1,\infty) $ with $m_l(F)<\infty$ such that
\begin{equation*}
f^{(p)}(z)=\left( \frac{v(r,f)}{z}\right)^p(1+o(1))f(z)
\end{equation*}
holds for all non-negative integers $p$ and for all $r\notin F$.
\end{lemma}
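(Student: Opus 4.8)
This is the classical Wiman--Valiron estimate, so I would prove it by the Wiman--Valiron method underlying \cite{lainebook}. Write $f(z)=\sum_{n\ge 0}a_nz^n$ and introduce the maximum term $\mu(r,f)=\max_{n\ge0}|a_n|r^n$ and the central index $\nu(r,f)=\max\{n:|a_n|r^n=\mu(r,f)\}$, abbreviated $N:=\nu(r,f)$. The first step is to record the structural facts that drive the method: $\log\mu(r,f)$ is nondecreasing and convex in $\log r$, with $\log\mu(r,f)=\log\mu(1,f)+\int_1^r \nu(t,f)\,t^{-1}\,dt$, so that $N$ is the right derivative of $\log\mu$ with respect to $\log r$, and $N\to\infty$ because $f$ is transcendental. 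I would also recall a two--sided comparison of the form $\mu(r,f)\le M(r,f)\le \mu(r,f)\,\nu(r,f)^{1/2+o(1)}$, valid outside a set of finite logarithmic measure; this shows that a point satisfying \eqref{prelim1} lies within a factor $\nu(r,f)^{-1/4+\delta}$ of the maximum modulus, and it is exactly this proximity that the conclusion exploits.

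For the main computation, fix $p$ and write, for $|z|=r$,
\[
f^{(p)}(z)-\Big(\tfrac{N}{z}\Big)^{p}f(z)=z^{-p}\sum_{n}\big([n]_p-N^{p}\big)a_nz^{n},\qquad [n]_p:=n(n-1)\cdots(n-p+1),
\]
so that the lemma is equivalent to the single estimate $\sum_n\big([n]_p-N^p\big)a_nz^n=o\big(N^p|f(z)|\big)$ as $r\to\infty$, $r\notin F$, the case $p=1$ being the model and the general $p$ being handled identically. Setting $b_n:=|a_n|r^n/\mu(r,f)\in[0,1]$ with $b_N=1$, the core input is the concentration of the $b_n$ about $n=N$: from the identity $\log b_n(r)=\int_r^{r_n}\big(\nu(t,f)-n\big)\,t^{-1}\,dt$, where $r_n$ is the radius at which $n$ first becomes the central index, one shows that, apart from a super--polynomially small tail, the $b_n$ are supported on a window $|n-N|\le N^{1/2+\eta}$ whose effective width is governed by $d\nu/d\log r$. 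Crucially, the estimates defining this window do not involve $p$, so the exceptional set they produce will be independent of $p$, as the statement requires.

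To make the window estimates uniform I would invoke a Borel--Nevanlinna type lemma to sweep into a set $F$ of finite logarithmic measure those radii at which $\nu(r,f)$, equivalently $d\log\mu/d\log r$, increases faster than a fixed power of itself. The delicate point, which I expect to be the main obstacle, is that the crude termwise bound $|[n]_p-N^p|\le C_pN^{p-1}|n-N|$ is by itself too lossy to beat the threshold $\nu(r,f)^{-1/4+\delta}$ when $\delta<1/4$: one must instead exploit the approximate symmetry of $\{b_n\}$ about $n=N$ and the consequent cancellation in the signed sum. Equivalently, I would use the local saddle--point shape $|f(re^{\iota\theta})|\approx M(r,f)\exp\{-\tfrac12\sigma^2(\theta-\theta_r)^2\}$ near the maximum--modulus direction $\theta_r$, where $\sigma^2\asymp d\nu/d\log r\le \nu(r,f)^{1+o(1)}$ off $F$; under \eqref{prelim1} the argument of $z$ then lies within $O\big(\sigma^{-1}\sqrt{\log\nu(r,f)}\big)$ of $\theta_r$, and differentiating the resulting quadratic approximation of $\log f$ gives $f'(z)/f(z)=(N/z)(1+o(1))$ and, after $p$ such steps, $f^{(p)}(z)=(N/z)^p(1+o(1))f(z)$. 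Collecting the exceptional radii arising from the comparison bound, the concentration estimate and the Borel--Nevanlinna lemma into a single $F$ of finite logarithmic measure, valid simultaneously for all $p$, then completes the proof.
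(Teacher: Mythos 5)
You should first note that the paper does not prove this lemma at all: it is quoted as a known result from \cite{lainebook} (it is the classical Wiman--Valiron theorem, Theorem 3.2 there), so your attempt has to be measured against the standard proof in that reference. Your setup is the right one and matches the classical framework: the maximum term $\mu(r,f)$, the central index, the identity $f^{(p)}(z)-(N/z)^p f(z)=z^{-p}\sum_n([n]_p-N^p)a_nz^n$, the comparison $M(r,f)\le \mu(r,f)\nu(r,f)^{1/2+o(1)}$ off a set of finite logarithmic measure, the concentration of $b_n=|a_n|r^n/\mu(r,f)$ near $n=N$, and a Borel--Nevanlinna lemma to control the growth of $\nu$ --- and you correctly diagnose that the termwise bound $|[n]_p-N^p|\le C_pN^{p-1}|n-N|$ is too lossy to reach the threshold $\nu(r,f)^{-1/4+\delta}$. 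The problem is that everything you offer at that decisive point is either unsubstantiated or false. First, the proposed mechanism of ``approximate symmetry of $\{b_n\}$ about $n=N$ and consequent cancellation in the signed sum'' is not available: the summands $a_nz^n$ carry phases $\arg a_n+n\theta$ which are completely arbitrary, so no symmetry of the moduli $b_n$ produces cancellation, and any argument that only uses moduli cannot work, since $\sum_n|n-N|b_n$ is genuinely of size $\nu^{1+o(1)}$ for typical radii while the target is $o(\nu^{3/4+\delta})$.

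Second, your saddle-point fallback is broken in two places. The localization claim --- that \eqref{prelim1} forces $\arg z$ to lie within $O(\sigma^{-1}\sqrt{\log\nu(r,f)})$ of the maximum-modulus direction $\theta_r$ --- is false: for $f(z)=\cosh z$ with $\theta_r=0$ one has $|f(-r)|=M(r,f)$, so $z=-r$ satisfies \eqref{prelim1} with $|\theta-\theta_r|=\pi$, yet the conclusion of the lemma still holds there. The Gaussian shape $|f(re^{\iota\theta})|\approx M(r,f)\exp\{-\tfrac12\sigma^2(\theta-\theta_r)^2\}$ is only a local statement near $\theta_r$ and cannot be used to confine $z$ globally; the correct proofs center the analysis at the point $z$ itself where \eqref{prelim1} holds, not at $\theta_r$. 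Moreover, ``differentiating the resulting quadratic approximation of $\log f$'' is not a legitimate step: an asymptotic bound for $|f|$ on a circle (or even for $\log f$) does not bound $f'/f$ without a Cauchy-type argument on a slightly larger region. The standard rigorous route is to study the analytic function $g(w)=\log\bigl(f(ze^{w})/f(z)\bigr)-\nu(r,f)\,w$ on a disc $|w|\le s$ with $s$ chosen in terms of $\nu(r,f)$, bound $\operatorname{Re}g$ there using $\log\mu(re^{x})-\log\mu(r)=\int_0^x\nu(re^{t})\,dt$ together with the $\mu$--$M$ comparison and the Borel-type control of $\nu$ at nearby radii (this is exactly where \eqref{prelim1} and the exceptional set $F$ enter), and then extract $g'(0)=o(\nu(r,f))$ and all higher derivatives simultaneously via Borel--Carath\'eodory and Cauchy estimates --- which is also what delivers the uniformity in $p$ that you assert but do not justify (iterating the $p=1$ case on $f'$ would require relating $\nu(r,f')$ to $\nu(r,f)$ and re-verifying \eqref{prelim1} for $f'$). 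As it stands, your proposal is a correct identification of the theorem and its framework, but the proof has a genuine gap exactly at the step you yourself flagged as the main obstacle.
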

\begin{remark}
If $|f(re^{\iota \theta})|=M(r,f)$ then equation \eqref{prelim1} holds and there exists $F\subset(1,\infty)$ with $m_l(F)<\infty$ such that
\begin{equation}
\left|\frac{f^{(p)}(re^{\iota \theta})}{f(re^{\iota \theta})}\right|=\left(\frac{v(r,f)}{r}\right)^p(1+o(1))
\end{equation}
for all non-negative integers $p$ and for all $r\notin F$. We know that the central index of a transcendental entire function $f$ satisfies $\nu(r,f)\geq1$, as a result we have
\begin{equation}\label{prelim2}
\left|\frac{f^{(p)}(re^{\iota \theta})}{f(re^{\iota \theta})}\right|\geq \frac{1}{r^p}(1+o(1))
\end{equation}
holds for all non-negative integers $p$ and $r\notin F$.
\end{remark}
The following result \cite[Proposition 9.3.2]{lainebook} provides relation between the proximity function of $1/(f-c)$ and characteristic function of $f$.
\begin{proposition}\label{mohon'ko}
Let $P(z,f)$ be a polynomial in $f$ and its derivatives with meromorphic coefficients $a_{\kappa}$, $\kappa \in I$. Suppose that $f$ is a transcendental meromorphic function solution of $P(z,f)=0$ and $c$ is a complex number. If $P(z,c)\neq 0$ then 
\begin{equation}
m\left(r,\frac{1}{f-c}\right)=O\left( \sum_{\kappa \in I} T(r,a_{\kappa}) \right)+S(r,f).
\end{equation}

\end{proposition}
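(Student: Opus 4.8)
The plan is to prove this classical Mohon'ko-type estimate by translating the solution so that the distinguished value becomes the origin, and then exploiting the fact that the proximity function $m(r,1/(f-c))$ only registers the arcs on which $f$ is close to $c$. First I would set $g:=f-c$ and substitute $f=g+c$ into the relation $P(z,f)=0$. Since $f^{(j)}=g^{(j)}$ for every $j\ge1$, the expression $P(z,g+c)$ is again a differential polynomial, now in $g$, whose coefficients $b_\alpha$ are polynomial combinations of the original coefficients $a_\kappa$ and powers of the constant $c$; in particular $T(r,b_\alpha)=O\bigl(\sum_{\kappa\in I}T(r,a_\kappa)\bigr)+O(1)$, and likewise $T(r,P(z,c))=O\bigl(\sum_{\kappa\in I}T(r,a_\kappa)\bigr)+O(1)$. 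The only term free of $g$ and its derivatives is obtained by setting $g\equiv0$, and it equals $P(z,c)$. Hence I may write
\[
0=P(z,g+c)=P(z,c)+\sum_{\alpha\neq0}b_\alpha\,g^{\alpha_0}(g')^{\alpha_1}\cdots(g^{(n)})^{\alpha_n},
\]
where every multi-index occurring in the sum has total weight $|\alpha|=\alpha_0+\cdots+\alpha_n\ge1$.

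Next, using $P(z,c)\not\equiv0$, I would solve for $1/g$ and factor each monomial so as to isolate logarithmic derivatives,
\[
\frac1g=-\frac1{P(z,c)}\sum_{\alpha\neq0}b_\alpha\,g^{\,|\alpha|-1}\prod_{j\ge1}\Bigl(\frac{g^{(j)}}{g}\Bigr)^{\alpha_j}.
\]
The key structural feature is that the exponent $|\alpha|-1$ is non-negative. The decisive step, and the one I expect to be the main obstacle, is the estimation of $m(r,1/g)$: a naive term-by-term bound would produce contributions of order $(|\alpha|-1)\,m(r,g)$, hence of size comparable to $T(r,f)$, far exceeding the asserted bound. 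The remedy is to observe that $\log^+|1/g|$ vanishes off the set $\{\theta : |g(re^{\iota\theta})|<1\}$, and on that set $|g|^{|\alpha|-1}\le1$ precisely because $|\alpha|-1\ge0$. Restricting the defining integral of $m(r,1/g)$ to this set therefore lets me discard the dangerous powers of $g$ and bound the integrand by $\tfrac1{|P(z,c)|}\sum_{\alpha}|b_\alpha|\prod_{j\ge1}|g^{(j)}/g|^{\alpha_j}$.

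Finally, taking $\log^+$, noting monotonicity of $\log^+$ on $\{|g|<1\}$, then integrating over the full circle (which only enlarges the non-negative integrand) and splitting the logarithm of a sum and of a product, I obtain
\[
m\Bigl(r,\frac1g\Bigr)\le m\Bigl(r,\frac1{P(z,c)}\Bigr)+\sum_{\alpha}m(r,b_\alpha)+\sum_{\alpha,\,j}\alpha_j\,m\Bigl(r,\frac{g^{(j)}}{g}\Bigr)+O(1).
\]
Here $m(r,1/P(z,c))\le T(r,P(z,c))+O(1)$ and $\sum_{\alpha}m(r,b_\alpha)\le\sum_{\alpha}T(r,b_\alpha)$ are both $O\bigl(\sum_{\kappa\in I}T(r,a_\kappa)\bigr)+O(1)$ by the first main theorem and the coefficient estimate above, while each $m(r,g^{(j)}/g)=S(r,g)$ by the lemma on the logarithmic derivative. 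Since $g=f-c$ satisfies $T(r,g)=T(r,f)+O(1)$, we have $S(r,g)=S(r,f)$ and $m(r,1/g)=m(r,1/(f-c))$, so the three groups of terms combine to the claimed bound $O\bigl(\sum_{\kappa\in I}T(r,a_\kappa)\bigr)+S(r,f)$, completing the argument.
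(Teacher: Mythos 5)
Your proof is correct and coincides with the classical Mohon'ko argument: the paper does not prove this proposition at all but quotes it from Laine's book (Proposition 9.3.2), and the proof there runs exactly along your lines --- substitute $g=f-c$, isolate the $g$-free term $P(z,c)\not\equiv 0$, solve for $1/g$ with the non-negative powers $g^{|\alpha|-1}$ factored out, restrict the proximity integral to the set where $|g|<1$ so those powers can be discarded, and finish with the first main theorem and the lemma on the logarithmic derivative. All the delicate points (the weight $|\alpha|-1\geq 0$, the coefficient bound $T(r,b_\alpha)=O\bigl(\sum_{\kappa\in I}T(r,a_\kappa)\bigr)+O(1)$, and $S(r,g)=S(r,f)$) are handled correctly, so nothing is missing.
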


The next three lemmas provides relation between maximum modulus and characteristic functions of two entire functions under certain conditions.
\begin{lemma}\label{ordlem}\cite{dsm2}
Suppose  $f(z)$ is an entire function with $\rho(f)\in(0,\infty)$. Then for each $\epsilon>0,$ there exists a set $S\subset (1,\infty)$ that satisfies $\underline{\log
dens}(S)>0$ and
\begin{equation}\label{meroine}
M(r,f)\geq \exp\{r^{\rho(f)-\epsilon}\}
\end{equation}
for all $r$ sufficiently large and $r\in S$.
\end{lemma}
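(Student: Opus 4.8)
The plan is to read the conclusion off from the definition of the order together with the monotonicity of the maximum modulus. Recall that $\rho(f)=\limsup_{r\to\infty}\frac{\log\log M(r,f)}{\log r}$, and that we may assume $\epsilon<\rho(f)$, since otherwise the exponent $\rho(f)-\epsilon$ is nonpositive and the inequality is trivial for large $r$. With this convention the set $A=\{r>1:\ M(r,f)>\exp(r^{\rho(f)-\epsilon/2})\}$ is unbounded: if it were bounded, then $\frac{\log\log M(r,f)}{\log r}\le\rho(f)-\epsilon/2$ for all large $r$, forcing $\rho(f)\le\rho(f)-\epsilon/2$. First I would fix a point $r_0\in A$ and exploit that $M(r,f)$ is nondecreasing in $r$.

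For $r\ge r_0$, monotonicity gives $M(r,f)\ge M(r_0,f)>\exp(r_0^{\rho(f)-\epsilon/2})$, and the right-hand side is at least $\exp(r^{\rho(f)-\epsilon})$ exactly when $r_0^{\rho(f)-\epsilon/2}\ge r^{\rho(f)-\epsilon}$, that is, when $r\le r_0^{\beta}$ with $\beta=\frac{\rho(f)-\epsilon/2}{\rho(f)-\epsilon}>1$. Hence the required bound $M(r,f)\ge\exp(r^{\rho(f)-\epsilon})$ holds on the whole interval $[r_0,r_0^{\beta}]$, whose logarithmic measure is $\int_{r_0}^{r_0^{\beta}}\frac{dt}{t}=(\beta-1)\log r_0$. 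The essential feature here is that this logarithmic length grows with $r_0$, rather than being a fixed constant as it would be for an interval of the form $[r_0,\theta r_0]$.

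Carrying this out along a sequence $r_n\to\infty$ in $A$ produces $S=\bigcup_n[r_n,r_n^{\beta}]$, on which the inequality holds for all sufficiently large $r\in S$. I expect the genuine obstacle to be the verification of $\underline{\logdens}(S)>0$, as opposed to the much easier $\overline{\logdens}(S)>0$. Writing $s=\log r$ and $s_n=\log r_n$, the set $S$ corresponds to a union of intervals $[s_n,\beta s_n]$, and the lower logarithmic density is governed by $\liminf_n\frac{(\beta-1)s_n}{s_{n+1}}$, the worst radii being those lying in the gaps just before a new interval begins. Since the order is only a limit superior, it controls how large $M(r,f)$ becomes along a sequence but not how frequently; nothing in the hypothesis by itself prevents the peak scales $r_n$ from being so widely separated that this liminf vanishes. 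Thus the crude argument yields positive upper logarithmic density immediately, and the lower density is the delicate point.

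The route I would try for closing this gap is to feed in the convexity of $\log M(r,f)$ as a function of $\log r$ (Hadamard's three-circles theorem): convexity forces a lower bound on the left derivative at each peak scale and hence a linear-type lower bound for $\log M$ to the right of $r_n$. One then combines this with the growing logarithmic length $(\beta-1)\log r_n$ of the intervals and a careful accounting of the contributions accumulated up to each radius $R$, aiming to bound $m_l(S\cap[1,R])$ below by a fixed positive multiple of $\log R$ for all large $R$. I should stress, however, that convexity alone only extends each good interval by a bounded factor and does not bridge an arbitrarily large gap; controlling the spacing of the consecutive peaks is therefore where I expect the real difficulty to lie.
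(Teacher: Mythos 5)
You have correctly proved the easy half of the statement and correctly identified that what remains is not a technicality. From $\rho(f)=\limsup_{r\to\infty}\log\log M(r,f)/\log r$ and the monotonicity of $M(r,f)$ you obtain, for each peak radius $r_0$ with $M(r_0,f)>\exp(r_0^{\rho(f)-\epsilon/2})$, the inequality \eqref{meroine} on all of $[r_0,r_0^{\beta}]$ with $\beta=(\rho(f)-\epsilon/2)/(\rho(f)-\epsilon)>1$; this gives a set of infinite logarithmic measure and $\overline{\logdens}(S)\geq(\beta-1)/\beta>0$. But the lemma asserts $\underline{\logdens}(S)>0$, and nothing in your construction controls the spacing of the peaks: taking $R$ just below $r_{n+1}$ gives $m_l(S\cap[1,R])/\log R\approx C\log r_n/\log r_{n+1}$, which tends to $0$ whenever $\log r_{n+1}/\log r_n\to\infty$. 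Your proposed repair via convexity of $\phi(s)=\log M(e^{s},f)$ cannot close this: at a peak $s_0$ convexity yields only the linear minorant $\phi(s)\geq\phi(s_0)+\phi'(s_0^{-})(s-s_0)$, which drops below $e^{(\rho(f)-\epsilon)s}$ once $s$ exceeds a bounded multiple of $s_0$, so each good interval is lengthened by at most a bounded factor and arbitrarily long gaps survive. For comparison purposes, note also that the paper offers no proof of this lemma at all — it is imported verbatim from \cite{dsm2} — so there is no in-paper argument against which to check yours.

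The gap is in fact unbridgeable, because the statement with \emph{lower} logarithmic density is not a consequence of the hypotheses: the order is a $\limsup$, and one can prescribe a convex increasing $\phi(s)$ that equals $e^{\rho s}$ at an extremely sparse sequence $s_n$ (say $\log s_{n+1}\gg s_n$) and in between follows first the minimal convexity-forced line of slope about $e^{\rho s_n}/s_n$ and then a very steep line rejoining $e^{\rho s_{n+1}}$ near $s_{n+1}$; a short computation shows $\{s:\phi(s)\geq e^{(\rho-\epsilon)s}\}$ is then essentially $\bigcup_n[s_n,\tfrac{\rho}{\rho-\epsilon}s_n]$ and has lower density $0$, while $\limsup_s\log\phi(s)/s=\rho$. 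By the classical realization theorem (Clunie) every such convex $\phi$ arises as $\log M(e^{s},f)\sim\phi(s)$ for some entire $f$ of order $\rho$, so for that $f$ the conclusion $\underline{\logdens}(S)>0$ fails. What your argument does prove — and what the standard form of this lemma in the literature asserts — is the version with infinite logarithmic measure, equivalently positive \emph{upper} logarithmic density. Since the main theorem's proof in this paper genuinely uses $\underline{\logdens}(S_1)>0$ in the density bookkeeping for $S_1\cap S_2\cap S_3$, the weakening is not cosmetic; flagging this, rather than trying to force the $\liminf$ bound, is the right outcome of your attempt.
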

\begin{lemma}\label{olem}\cite{dsm2}
Let $f(z)$ and $g(z)$ be two meromorphic functions satisfying $\rho(g)<\rho(f).$ Then there exists a set $S\subset(1,\infty)$ with $\underline{\log dens}(S)>0$ such that  
$$T(r,g)=o(T(r,f))$$
 for sufficiently large $r\in S$.
\end{lemma}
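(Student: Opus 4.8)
The plan is to compare $T(r,g)$ and $T(r,f)$ through the definition of the order of growth, extracting a one-sided power bound for $g$ that is valid for all large $r$ and a matching lower power bound for $f$ that holds on a set of positive lower logarithmic density; the strict gap between the two orders then forces the quotient to zero along that set. Write $\rho_1=\rho(f)$ and $\rho_2=\rho(g)$, so that $0\le\rho_2<\rho_1\le\infty$, and fix $\epsilon>0$ with $\rho_2+2\epsilon<\rho_1$ (when $\rho_1=\infty$ one replaces $\rho_1-\epsilon$ by an arbitrarily large constant $K>\rho_2+1$ throughout). Since $\rho_2=\limsup_{r\to\infty}\frac{\log T(r,g)}{\log r}$, the definition of order immediately yields $T(r,g)\le r^{\rho_2+\epsilon}$ for every $r\ge R_0$. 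This is the easy, global half.

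The substantial half is to produce a set $S\subset(1,\infty)$ with $\underline{\logdens}(S)>0$ on which $T(r,f)\ge r^{\rho_1-\epsilon}$. When $f$ is entire of finite positive order this is essentially Lemma \ref{ordlem}: it supplies a set $S$ with $\underline{\logdens}(S)>0$ on which $M(r,f)\ge\exp\{r^{\rho_1-\epsilon}\}$, and combining this with the elementary inequality $\log M(r,f)\le 3\,T(2r,f)$ (the case $R=2r$ of $\log M(r,f)\le\frac{R+r}{R-r}T(R,f)$) gives $T(2r,f)\ge\frac13 r^{\rho_1-\epsilon}$ on $S$; replacing $S$ by $\{2r:r\in S\}$, which has the same lower logarithmic density, produces the desired lower bound for $T(r,f)$. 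For the general meromorphic statement (and for $\rho_1=\infty$) one repeats the argument of Lemma \ref{ordlem} verbatim with $T(r,f)$ in place of $\log M(r,f)$: the only structural features used there are that $\log M(r,f)$ is a positive, increasing, convex function of $\log r$, and by Cartan's identity $T(r,f)=\frac{1}{2\pi}\int_0^{2\pi}N(r,1/(f-e^{\iota\theta}))\,d\theta+\log^+|f(0)|$ the Nevanlinna characteristic enjoys exactly these three properties.

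With both bounds in hand the conclusion is immediate. For $r\in S$ with $r\ge R_0$ one has
\[
\frac{T(r,g)}{T(r,f)}\le \frac{r^{\rho_2+\epsilon}}{r^{\rho_1-\epsilon}}=r^{-(\rho_1-\rho_2-2\epsilon)}\longrightarrow 0
\]
as $r\to\infty$, because $\rho_1-\rho_2-2\epsilon>0$, and this is precisely the assertion $T(r,g)=o(T(r,f))$ for sufficiently large $r\in S$.

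I expect the main obstacle to be the lower-bound step, and specifically the claim that the good set $S$ has positive \emph{lower} logarithmic density rather than merely positive upper density. The definition of order supplies only a sequence $r_n\to\infty$ along which $T(r_n,f)$ is comparable to $r_n^{\rho_1}$, and promoting such a sequence to a set of positive lower logarithmic density is exactly where the convexity of $T(r,f)$ in $\log r$ must be used to control how far below $r^{\rho_1-\epsilon}$ the characteristic can fall between consecutive peaks. This convexity estimate is the heart of Lemma \ref{ordlem}, and it is the single point on which the present lemma rests.
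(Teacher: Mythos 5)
The paper offers no proof of this lemma at all --- it is imported wholesale from \cite{dsm2} --- so there is nothing to compare your argument against line by line; your skeleton (a global power upper bound for $T(r,g)$ from the definition of order, a lower bound $T(r,f)\geq r^{\rho(f)-\epsilon}$ on a set of positive lower logarithmic density obtained from Lemma \ref{ordlem}, then division) is clearly the intended route, and the reduction of the entire, finite-order case to Lemma \ref{ordlem} via $\log M(r,f)\leq 3T(2r,f)$ together with the dilation invariance of logarithmic density is correct as far as it goes. The genuine gap is precisely the step you yourself flag as the heart: the claim that the proof of Lemma \ref{ordlem} transfers ``verbatim'' to $T(r,f)$ (and to $\rho(f)=\infty$) because the only structural features it uses are positivity, monotonicity, and convexity in $\log r$. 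You have not seen that proof --- it too is only cited here --- so this is a deferral rather than an argument; worse, those three properties are provably insufficient to produce a set of positive \emph{lower} logarithmic density. What they honestly give is positive \emph{upper} logarithmic density: from $T(r_n,f)\geq r_n^{\rho-\epsilon/2}$ along a sequence, monotonicity yields $T(r,f)\geq r^{\rho-\epsilon}$ on the intervals $[r_n,r_n^{\beta}]$ with $\beta=(\rho-\epsilon/2)/(\rho-\epsilon)$, and these have upper logarithmic density at least $1-1/\beta>0$; but nothing forces them to recur in every window of scales.

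To see that no argument from those features alone can close the gap, work in the variable $x=\log r$ and let $u(x)$ be convex and increasing with sparse peaks $u(x_n)=e^{\rho x_n}$, say $x_{n+1}=x_n^{2}$, taking between peaks the minimal convex continuation: a steep chord into each peak, after which convexity forces only linear growth with slope comparable to $e^{\rho x_n}$. Then $\limsup_{x\to\infty}\log u(x)/x=\rho$, yet the set where $u(x)\geq e^{(\rho-\epsilon)x}$ is confined to intervals of roughly $[x_n-O(1),\tfrac{\rho}{\rho-\epsilon}x_n]$, whose union has lower density zero; since such log-convex growth functions are realized asymptotically as $\log M(r,f)$ (hence, up to the dilation inequality, as $T(r,f)$) of entire functions, the lower-density conclusion genuinely requires input beyond the $\limsup$ definition of order --- for instance a hypothesis on the lower order of $f$ (lower order exceeding $\rho(g)$), or Pólya-peak machinery --- or else must be weakened to $\overline{\logdens}(S)>0$. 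Note also that the same construction applies to $\log M$ itself, so even your conditional reduction to the cited Lemma \ref{ordlem} inherits this difficulty rather than resolving it: the final division step in your proposal is fine, but the set $S$ on which it is performed has not actually been produced.
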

\begin{lemma}\label{lessord}\cite{dsm2}
Suppose  $f(z)$ and $g(z)$ be two entire functions satisfying $\rho(g)<\rho(f)$. Then for $0<\epsilon\leq \min\{3\rho(f)/4, \left(\rho(f)-\rho(g)\right)/2 \}, $ there exists $S\subset (1,\infty)$ with $\overline{\log dens}(S)=1$ satisfying
$$ |g(z)|=o\left(M(|z|,f)\right)$$
for sufficiently large $|z|\in S$.
\end{lemma}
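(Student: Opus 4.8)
The plan is to deduce the estimate by comparing maximum moduli, since $|g(z)|\le M(|z|,g)$ for every $z$; thus it suffices to produce a set $S\subset(1,\infty)$ with $\overline{\log dens}(S)=1$ on which $M(r,g)/M(r,f)\to 0$. First I would record the two one-sided growth bounds that the orders provide. Directly from the definition of order as a limit superior, for the prescribed $\epsilon$ and all large $r$ one has the \emph{global} upper estimate $M(r,g)\le \exp\{r^{\rho(g)+\epsilon}\}$, while Lemma \ref{ordlem} supplies a lower estimate $M(r,f)\ge \exp\{r^{\rho(f)-\epsilon}\}$ on a suitable set of $r$. The two constraints on $\epsilon$ are exactly what make the comparison work: the bound $\epsilon\le(\rho(f)-\rho(g))/2$ separates the exponents, $\rho(g)+\epsilon<\rho(f)-\epsilon$, and the bound $\epsilon\le 3\rho(f)/4$ keeps $\rho(f)-\epsilon>0$. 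Hence at any $r$ where both estimates hold, $M(r,g)/M(r,f)\le \exp\{r^{\rho(g)+\epsilon}-r^{\rho(f)-\epsilon}\}\to 0$, which already gives $|g(z)|=o(M(|z|,f))$ on the set furnished by Lemma \ref{ordlem}.

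The substantive point is to arrange this good set with \emph{upper} logarithmic density equal to $1$, whereas Lemma \ref{ordlem} only guarantees positive lower logarithmic density. My approach is to return to the characterization $\rho(f)=\limsup_{r\to\infty}\frac{\log\log M(r,f)}{\log r}$, which yields a sequence $r_n\to\infty$ with $\log M(r_n,f)\ge r_n^{\rho(f)-\epsilon/2}$, and then to propagate this near-maximal growth forward. Two standard facts drive the propagation: $M(r,f)$ is increasing, and $\log M(r,f)$ is a convex function of $\log r$ (Hadamard's three-circles theorem). The monotonicity alone gives $\log M(r,f)\ge r_n^{\rho(f)-\epsilon/2}$ for all $r\ge r_n$, hence $\log M(r,f)\ge r^{\rho(f)-\epsilon}$ throughout $r\in[r_n,\,r_n^{\beta}]$ with $\beta=(\rho(f)-\epsilon/2)/(\rho(f)-\epsilon)>1$. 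Passing to $u=\log r$ converts logarithmic density into ordinary upper density and turns each such block into the interval $[\log r_n,\beta\log r_n]$ of relative length $1-1/\beta$; convexity of $\log M(r,f)$ in $\log r$ is what prevents the modulus from collapsing between consecutive near-maximal points and lets these blocks be chained together.

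The step I expect to be the genuine obstacle is precisely this density bookkeeping. A single exponent comparison produces blocks of relative logarithmic length only $1-1/\beta<1$, so a crude union cannot by itself reach upper density $1$; the conclusion hinges on the near-maximal points $r_n$ being sufficiently frequent on the logarithmic scale, so that the propagated intervals overlap and exhaust $(1,\infty)$ up to logarithmic density one. Thus the delicate work is to extract from the $\limsup$ defining $\rho(f)$ enough such points and to use the convexity of $\log M(r,f)$ in $\log r$ to knit the corresponding intervals together, after which the blocks cover $(1,\infty)$ with $\overline{\log dens}=1$. Granting this, the exponent comparison of the first paragraph applies on the resulting set $S$ and gives $|g(z)|\le M(|z|,g)=o\!\left(M(|z|,f)\right)$ for sufficiently large $|z|\in S$, which is the assertion of the lemma.
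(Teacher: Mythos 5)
The gap is the one you yourself flag with the clause ``Granting this'': the entire proof is conditional on the claim that the near-maximal radii of $f$ are frequent enough, on the logarithmic scale, for the propagated blocks to cover $(1,\infty)$ up to upper logarithmic density one. That claim is not merely unproved in your sketch --- it is false along the route you propose. The $\limsup$ defining $\rho(f)$ guarantees only \emph{some} sequence $r_n$ with $\log M(r_n,f)\geq r_n^{\rho(f)-\epsilon/2}$; if $f$ has very irregular growth (say lower order $0$), these radii can be arbitrarily sparse in $\log r$. Your two propagation tools then give exactly what you computed and nothing more: monotonicity yields the block $[\log r_n,\beta\log r_n]$ with $\beta=(\rho(f)-\epsilon/2)/(\rho(f)-\epsilon)$, while convexity of $u\mapsto\log M(e^u,f)$ forces only that the slope never decreases after a peak, i.e.\ a lower bound for $\log M(e^u,f)$ that is \emph{linear} in $u$. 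A linear lower bound is negligible against the exponential threshold $e^{(\rho(f)-\epsilon)u}$, so convexity does not ``prevent the modulus from collapsing'' in the relevant sense: the function may sit below the threshold throughout a long stretch after each block. Indeed, by Clunie's theorem any increasing convex $\phi(u)$ with $\phi(u)/u\to\infty$ is asymptotic to $\log M(e^u,f)$ for some entire $f$, and choosing $\phi$ to consist of sparse steep spurts (reaching $e^{(\rho-o(1))u_n}$ at $u_n$) joined by admissible near-linear stretches produces a function of order $\rho(f)$ for which the set $\{r:\log M(r,f)\geq r^{\rho(f)-\epsilon}\}$ has upper logarithmic density on the order of $\epsilon/\rho(f)<1$. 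So the blocks you hope to knit together simply are not there, and no refinement of the bookkeeping can repair this.

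The deeper consequence is that your first-paragraph reduction --- replacing $M(r,g)$ by the global bound $\exp\{r^{\rho(g)+\epsilon}\}$ and then demanding $M(r,f)\geq\exp\{r^{\rho(f)-\epsilon}\}$ on a density-one set --- is a dead end: any argument reaching $\overline{\logdens}(S)=1$ must compare $M(r,f)$ with $M(r,g)$ itself (or exploit further structure of the functions), not with a fixed sub-order threshold. Be aware, too, that the comparison is genuinely delicate: pairing such an irregular $f$ with a $g$ of regular growth $\log M(r,g)\sim r^{\rho(g)}$ makes $M(r,g)>M(r,f)$ on a set of positive \emph{lower} logarithmic density, which shows at the very least that the conclusion cannot follow from the bare inequality $\rho(g)<\rho(f)$ by maximum-modulus estimates of the kind you use. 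Note finally that the present paper does not prove this lemma at all --- it is imported from \cite{dsm2} --- so the proof you were asked to supply must stand on its own, and as written it does not: the step you defer is the mathematical heart of the statement, not a technicality.
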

\section{Proof of Main Theorem}
We state and prove a lemma which will be used in the proof of Theorem \ref{higthm}.
\begin{lemma}\label{probig}
Suppose  $A_{m-1}(z),\ldots, A_0(z)$ and $H(z)$ are entire functions and there is an integer $j\in\{1,2, \ldots, m-1\}$ such that $\rho(A_j)\neq\rho(A_0)$, $\max\{\rho(A_k):k=1,2,\ldots, m-1, k\neq j\}$ and $\rho(H)<\max\{ \rho(A_j), \rho(A_0)\}$. Then all transcendental solutions $f$ of  equation \eqref{sde3} of finite order satisfies $\rho(f)\geq \max\{ \rho(A_j),\rho(A_0)\}.$
\end{lemma}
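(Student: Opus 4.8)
The plan is to argue by contradiction, isolating the coefficient of largest order and comparing Nevanlinna characteristic functions. Write $\beta=\max\{\rho(A_0),\rho(A_j)\}$ and suppose, contrary to the claim, that $f$ is a transcendental finite-order solution with $\rho(f)<\beta$. Since $\rho(A_0)\neq\rho(A_j)$, exactly one index $d\in\{0,j\}$ satisfies $\rho(A_d)=\beta$; the hypotheses then guarantee that every remaining coefficient $A_k$ ($k\neq d$), together with $H$, has order strictly less than $\beta$. Dividing \eqref{sde3} by $f^{(d)}$ (with the convention $f^{(0)}:=f$) and solving for $A_d$ gives
\begin{equation*}
A_d=\frac{H}{f^{(d)}}-\frac{f^{(m)}}{f^{(d)}}-\sum_{\substack{0\le k\le m-1\\ k\neq d}}A_k\,\frac{f^{(k)}}{f^{(d)}}.
\end{equation*}

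First I would pass to proximity functions. Because $A_d$ is entire, $T(r,A_d)=m(r,A_d)$, and subadditivity of $m(r,\cdot)$ bounds $T(r,A_d)$ by the sum of the proximity functions of the terms on the right, with each coefficient contributing $m(r,A_k)=T(r,A_k)$. The logarithmic-derivative factors split into two types. For $k>d$ one has $f^{(k)}/f^{(d)}=(f^{(d)})^{(k-d)}/f^{(d)}$, so the logarithmic-derivative estimate (Lemma \ref{gunlem}, applicable since $f$, hence $f^{(d)}$, is transcendental of finite order) yields $m(r,f^{(k)}/f^{(d)})=O(\log r)$; the same applies to $m(r,f^{(m)}/f^{(d)})$. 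For $k<d$ (which occurs only when $d=j\ge1$) I would write $f^{(k)}/f^{(d)}=(f^{(k)}/f)(f/f^{(d)})$ and treat the reciprocal factor through the first main theorem: $m(r,f/f^{(d)})\le m(r,f^{(d)}/f)+N(r,f^{(d)}/f)+O(1)$, where $m(r,f^{(d)}/f)=O(\log r)$ and the poles of $f^{(d)}/f$ lie among the zeros of $f$ with multiplicity at most $d$, so $N(r,f^{(d)}/f)\le d\,N(r,1/f)\le d\,T(r,f)+O(1)$. Finally $m(r,H/f^{(d)})\le T(r,H)+T(r,f)+O(\log r)$ by the same device.

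Collecting these estimates produces, for all large $r$,
\begin{equation*}
T(r,A_d)\le C\,T(r,f)+T(r,H)+\sum_{k\neq d}T(r,A_k)+O(\log r),
\end{equation*}
for a constant $C$ depending only on $m$ and $d$. Every function on the right has order strictly below $\beta$: $\rho(f)<\beta$ by assumption, while $\rho(H)<\beta$ and $\rho(A_k)<\beta$ for $k\neq d$ by hypothesis. Hence the right-hand side is $O(r^{\beta-\eta})$ for some $\eta>0$, which forces $\rho(A_d)=\limsup_{r\to\infty}\frac{\log T(r,A_d)}{\log r}\le\beta-\eta<\beta$, contradicting $\rho(A_d)=\beta$. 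Therefore $\rho(f)\ge\beta$, as asserted. (If one prefers, the order comparisons may instead be read off from Lemma \ref{olem}.)

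I expect the only delicate point to be the quotients $f^{(k)}/f^{(d)}$ with $k<d$, that is, the case in which the dominant coefficient is $A_j$ rather than $A_0$. These are reciprocals of logarithmic derivatives, so, unlike the $k>d$ terms, they are not controlled by Lemma \ref{gunlem} directly (applying it to the pair $(d,k)$ only bounds them from below); one must instead invoke $N(r,1/f)$, the counting function of the zeros of $f$. The saving observation is that for a \emph{finite}-order $f$ this counting function satisfies $N(r,1/f)\le T(r,f)+O(1)$ and hence has order $\rho(f)<\beta$, which keeps every auxiliary term safely below order $\beta$ and preserves the contradiction. When $d=0$, i.e.\ $\rho(A_0)>\rho(A_j)$, this complication disappears entirely, since then all remaining indices satisfy $k>d$.
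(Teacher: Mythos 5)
Your argument is correct, and its skeleton matches the paper's: isolate the dominant coefficient $A_d$, pass to proximity functions, and dispose of the quotient terms via the logarithmic derivative lemma. The genuine divergence is exactly at the step you flagged as delicate, the term $m\left(r,f/f^{(j)}\right)$ arising when the dominant coefficient is $A_j$. The paper controls it by invoking Lemma \ref{lemm}, the Wiman--Valiron central-index estimate; but that is a pointwise bound valid where $|f(re^{\iota\theta})|=M(r,f)$, and it does not directly bound the integral mean $m\left(r,f/f^{(j)}\right)$ over the whole circle (nor the Nevanlinna functions appearing in the paper's displayed inequality), so this is the least solid point of the paper's own proof. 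Your route --- $m\left(r,f/f^{(j)}\right)\leq T\left(r,f^{(j)}/f\right)+O(1)=m\left(r,f^{(j)}/f\right)+N\left(r,f^{(j)}/f\right)+O(1)$, combined with the observation that $f^{(j)}/f$ has poles only at zeros of $f$ and of order at most $j$, whence $N\left(r,f^{(j)}/f\right)\leq j\,N(r,1/f)\leq j\,T(r,f)+O(1)$ --- is rigorous and self-contained. A second, smaller difference: the paper compares $\sum_{k\neq d}T(r,A_k)$ with $T(r,A_d)$ via Lemma \ref{olem} and so obtains its inequality only on a set of positive lower logarithmic density, which then requires some care to convert into the order inequality $\rho(A_d)\leq\rho(f)$; you instead compare orders directly, and since the logarithmic derivative lemma for finite-order functions needs no exceptional set, you get $T(r,A_d)=O(r^{\gamma})$ with $\gamma<\beta$ for \emph{all} large $r$, dispensing with the density bookkeeping entirely. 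Both routes reach the stated conclusion; yours buys a cleaner and more airtight treatment of the $\rho(A_j)>\rho(A_0)$ case at essentially no extra cost.
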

\begin{proof}
 Suppose that $\rho(A_j)<\rho(A_0)$. Then using equation \eqref{sde3}, first fundamental theorem of Nevanlinna theory, lemma of logarithmic derivatives and Lemma \ref{olem} we have
\begin{align*}
m(r,A_0)&\leq m\left(r,\frac{f^{(m)}}{f}\right)+m\left(r,\frac{f^{(m-1)}}{f}\right)+\ldots+m\left(r,\frac{f'}{f}\right)\\
&+m\left(r,A_{m-1}\right)+\ldots+m\left(r,A_1\right)+m\left(r,\frac{H}{f}\right)
\end{align*}
\begin{align*}
T(r,A_0)&\leq O(\log{r})+T\left(r,A_{m-1}\right)+\ldots+T\left(r,A_1\right)+T(r,f)+T(r,H)\\
&= O(\log{r})+o\left(T\left(r,A_0\right)\right)+T(r,f)
\end{align*}
for all $r\geq R$ and $r\in S$ where $\overline{\logdens}(S)>0$. Combining the equations, we obtain $\rho(A_0)\leq \rho(f)$. Similarly, when $\rho(A_0)<\rho(A_j)$ then using equation \eqref{sde3} we have
\begin{align*}
|A_j(z)|&\leq \left|\frac{f^{(m)}(z)}{f^{(j)}(z)}\right|+|A_{m-1}(z)|\left|\frac{f^{(m-1)}(z)}{f^{(j)}(z)}\right|+\ldots+ |A_{j+1}(z)|\left|\frac{f^{(j+1)}(z)}{f^{(j)}(z)}\right|\\
& +|A_{j-1}(z)|\left|\frac{f^{(j-1)}(z)}{f^{(j)}(z)}\right|+\ldots+ |A_0(z)|\left|\frac{f(z)}{f^{(j)}(z)}\right|+\left|\frac{H(z)}{f^{(j)}(z)}\right|\\
&= \left|\frac{f^{(m)}(z)}{f^{(j)}(z)}\right|+|A_{m-1}(z)|\left|\frac{f^{(m-1)}(z)}{f^{(j)}(z)}\right|+\ldots+ |A_{j+1}(z)|\left|\frac{f^{(j+1)}(z)}{f^{(j)}(z)}\right|\\
&+\left|\frac{f(z)}{f^{(j)}(z)}\right|\left(|A_{j-1}(z)|\left|\frac{f^{(j-1)}(z)}{f(z)}\right|+\ldots+|A_0(z)|+\left|\frac{H(z)}{f(z)}\right|\right)
\end{align*}
This will imply
\begin{align*}
m(r,A_j)&\leq m\left(r,\frac{f^{(m)}}{f^{(j)}}\right)+ m\left(r,\frac{f^{(m-1)}}{f^{(j)}}\right)+\ldots+ m\left(r,\frac{f^{(j+1)}}{f^{(j)}}\right)+ m\left(r,\frac{f}{f^{(j)}}\right)+\ldots\\
&+ m\left(r,A_{m-1}\right) +\ldots + m\left(r,A_0\right) + m\left(r,\frac{H}{f}\right)
\end{align*}
Now using first fundamental theorem of Nevanlinna theory, lemma of logarithmic derivatives, Lemma \ref{lemm} and \ref{olem} we obtain
\begin{align*}
T\left(r,A_j \right)&\leq O\left(\log{r}\right)+o(1)+\sum_{k=0, k\neq j}^{m-1}T\left(r,  A_k \right)+T\left(r, H \right)+T\left(r, f\right)\\
&=O\left(\log{r}\right)+o(1)+o\left( T\left(r, A_j\right)\right)+T(r,f)
\end{align*}
for sufficiently large $r\in S\setminus F$. This will imply that $\rho(A_j)\leq\rho(f)$.
\end{proof}
It is to be noted that hypothesis of Lemma \ref{probig} are only necessary and not sufficient. Examples \ref{higeg1}, \ref{higeg2} and \ref{higeg7} justifies that hypothesis of Lemma \ref{probig} are not sufficient. Also, Examples \ref{higeg4} - \ref{higeg6} justifies that hypothesis of Lemma \ref{probig} are necessary.

\begin{proof}[{\bf{Proof of Theorem \ref{higthm}}}]
\begin{enumerate}[(a)]
\item Suppose there is a transcendental solution $f$ of equation \eqref{sde3} having finite order. From Lemma \ref{gunlem}, there exists a set $E\subset (1,\infty)$ satisfying $m_l(E)<\infty$ such that 
\begin{equation}\label{hig1}
\left|\frac{f^{(k)}(z)}{f^{(l)}(z)}\right|\leq |z|^{m\rho(f)}, l<k=1,2,\ldots, m-1
\end{equation}
for all $z$ satisfying $|z|=r\notin E\cup[0,1]$ and $|z|\geq R$. Then Lemma \ref{ordlem} implies that there exists $S_1\subset(1,\infty)$ satisfying $0<\underline{\log dens}(S_1)=\delta$ such that 
\begin{equation}\label{hig2}
M(r,A_0)\geq \exp\left(r^{\rho(A_0)-\epsilon}\right)
\end{equation}
for all $r\in S_1$ and $r>R$. We suppose that $\left|f(re^{\iota \theta_r})\right|=M(r,f)$ for each $r$. From Lemma \ref{wale}, for $\delta>0$ and $C\in (0,1)$, there exists $l_0\in (0,1/2)$ and $S_2\subset (1,\infty)$ with $\underline{\logdens}(S_2)\geq 1-\delta/2 $ such that
\begin{equation*}
e^{-5\pi}M(r,f)^{(1-C)}\leq |f(re^{\iota \theta})|
\end{equation*}
for all sufficiently large $r\in S_2$ and $\theta$ such that $|\theta-\theta_r|\leq l_0$. Using Lemma \ref{probig}, $\rho(f)\geq\max\{\rho(A_j),\rho(A_0)\}>\rho(H)$ and hence Lemma \ref{lessord} implies 
\begin{equation}\label{hig3}
\frac{|H(z)|}{M(r,f)}\to 0
\end{equation}
as $r \to \infty$ where $r\in S_3\subset(1,\infty)$ and $\overline{\logdens}(S_3)=1$. 
We know that
$$ \chi_{S_1\cap S_2}=\chi_{S_1}+\chi_{S_2}-\chi_{S_1\cup S_2}$$
and $\overline{\log dens}(S_1\cup S_2)\leq 1$
therefore,
\begin{align*}
\underline{\log dens}(S_1\cap S_2)&\geq \underline{\log dens}(S_1)+\underline{\log dens}(S_2)-\overline{\log dens}(S_1\cup S_2)\\
&\geq \delta +1-\frac{\delta}{2}-1=\frac{\delta}{2}.
\end{align*}
Also,
\begin{align*}
\overline{\log dens}(S_1\cap S_2\cap S_3)&\geq \underline{\log dens}(S_1\cap S_2)+\overline{\log dens}(S_3)\\
&-\overline{\log dens}(S_1\cup S_2\cup S_3)\\
&\geq \frac{\delta}{2}+1-1=\frac{\delta}{2}>0.
\end{align*}
As $m_l(E)<\infty,$ this gives $\overline{\log dens}(S_1\cap S_2\cap S_3 \setminus E)>0$. Hence we can choose $z_q=$\\
$r_q e^{\iota \theta_q}$ with $r_q\rightarrow \infty$ such that 
$$ r_q\in (S_1\cap S_2\cap S_3)\setminus E, \quad \left|f(r_qe^{\iota \theta_q})\right|=M(r_q,f).$$
We may suppose that there exists a subsequence $(\theta_q)$ such that 
$$\lim_{q\rightarrow \infty}\theta_q = \theta_0.$$ 
We have $\lambda(A_j)<\rho(A_j)$ therefore, $A_j(z)=v(z)e^{P(z)}$, where $\rho(v)<\rho\left(e^{P(z)}\right)=n\in \mathbb{N}$. First we consider $\rho(A_j)<\rho(A_0)$ and discuss the following cases:
\begin{enumerate}[(i)]
\item \label{dela} if $\delta(P,\theta_0)>0$, then since $\delta(P,\theta)$ is a continuous function we have,
\begin{equation}
\frac{1}{2}\delta(P,\theta_0)<\delta(P,\theta_m)<\frac{3}{2}\delta(P,\theta_0)
\end{equation}
for all sufficiently large $m\in \mathbb{N}$. From part (i) of Lemma \ref{implem} we have
\begin{equation}\label{hig4}
\exp\left((1-\epsilon)\frac{1}{2}\delta(P,\theta_0)r_q^n\right)\leq |A_j(z_q)|\leq \exp\left( (1+\epsilon)\frac{3}{2}\delta(P,\theta_0)r_q^n\right)
\end{equation}
for sufficiently large $m\in \mathbb{N}.$ Using equations \eqref{sde3}, \eqref{hig1}, \eqref{hig2}, \eqref{hig3} and \eqref{hig4} we get
\begin{equation*}
\begin{split}
\exp\left(r_q^{\rho(A_0)-\epsilon}\right)&\leq M(r,A_0)\\
&\leq \left|\frac{f^{(m)}(z_q)}{f(z_q)}\right|+|A_{m-1}(z_q)|\left|\frac{f^{(m-1)}(z_q)}{f(z_q)}\right|+\ldots+|A_1(z_q)|\left|\frac{f'(z_q)}{f(z_q)}\right|+\left| \frac{H(z_q)}{f(z_q)}\right|\\
&\leq r_q^{m\rho(f)}\left( 1+|A_{m-1}(z_q)|+\ldots+|A_j(z_q)|+\ldots+|A_1(z_q)|\right)+o(1)\\
&\leq  r_q^{m\rho(f)}\left( 1+\exp\left( (1+\epsilon)\frac{3}{2}\delta(P,\theta_0)r_q^n\right) +(m-2)\exp{r_q^{\eta}}\right)+o(1)
\end{split}
\end{equation*}
where $\max\{\rho(A_k): k=1,2,\ldots, m-1, k\neq j\}<\eta<\rho(A_0)$. But this is a contradiction for sufficiently large $r_q$, as $\rho(A_0)>\rho(A_j)=n$.
\item\label{delb} If $\delta(P,\theta_0)<0$, then since $\delta(P,\theta)$ is a continuous function therefore,
\begin{equation*}
\frac{3}{2}\delta(P,\theta_0)<\delta(P,\theta_q)<\frac{1}{2}\delta(P,\theta_0)
\end{equation*}
for sufficiently large $q \in \mathbb{N}$. Using part (ii) of Lemma \ref{implem}, we get
\begin{equation}\label{hig5}
\exp{\left((1+\epsilon)\frac{3}{2}\delta(P,\theta_0)r^n_q\right)}\leq |A(z_q)|\leq \exp{\left((1-\epsilon)\frac{1}{2}\delta(P,\theta_0)r^n_q\right)}
\end{equation}
for sufficiently large $q\in \mathbb{N}$. From equations \eqref{sde3}, \eqref{hig1}, \eqref{hig2}, \eqref{hig3} and \eqref{hig5} we have
\begin{align*}
\exp\left(r_q^{\rho(A_0)-\epsilon}\right)&\leq M(r,A_0)\\
&\leq \left|\frac{f^{(m)}(z_q)}{f(z_q)}\right|+|A_{m-1}(z_q)|\left|\frac{f^{(m-1)}(z_q)}{f(z_q)}\right|+\ldots+|A_1(z_q)|\left|\frac{f'(z_q)}{f(z_q)}\right|+\left| \frac{H(z_q)}{f(z_q)}\right|\\
&\leq r_q^{m\rho(f)}\left( 1+|A_{m-1}(z_q)|+\ldots+|A_j(z_q)|+\ldots+|A_1(z_q)|\right)+o(1)\\
&\leq  r_q^{m\rho(f)}\left( 1+\exp\left( (1-\epsilon)\frac{1}{2}\delta(P,\theta_0)r_q^n\right)+(m-2)\exp{r_q^{\eta}}\right) +o(1)\\
&\leq  r_q^{m\rho(f)}\left( 1+o(1)+(m-2)\exp{r_q^{\eta}}\right)+o(1)
\end{align*}
which will be a contradiction to the fact that $\rho(A_0)>1$.
\item Finally, suppose $\delta(P,\theta_0)=0$. We know that $|\theta_q-\theta_0|\leq l_0$ for sufficiently large $q\in \mathbb{N}$. Choose $\theta_q^*$ such that $l_0/3\leq \theta_q^*-\theta_q\leq l_0$ and $\theta_q^*\to \theta_0^*$ as $q\to \infty$, we have
\begin{equation*}
\theta_q+\frac{l_0}{3}\leq\theta_q^*\leq\theta_q+l_0 \\
\qquad \mbox{which implies}\\
 \qquad \theta_0+\frac{l_0}{3}\leq\theta_0^*\leq\theta_0+l_0
\end{equation*}
as $q\to \infty$. We may assume without loss of generality that $\delta(P,\theta_0^*)>0$ then as done in case (\ref{dela}), we obtain
\begin{equation}\label{hig6}
\exp{\left((1-\epsilon)\frac{1}{2}\delta(P,\theta_0^*)r^n_q\right)}\leq |A_j(z_q^*)|\leq \exp{\left((1+\epsilon)\frac{3}{2}\delta(P,\theta_0^*)r^n_q\right)}.
\end{equation}
for sufficiently large $q\in \mathbb{N}$. Using equations \eqref{sde3}, \eqref{hig1}, \eqref{hig2}, \eqref{hig3} and \eqref{hig6} we get a contradiction as in case (\ref{dela}). Similarly if $\delta(P,\theta_0^*)<0$ then we get contradiction as in case (\ref{delb}).
\end{enumerate}
Now consider $\rho(A_0)<\rho(A_j)$ and following cases:
\begin{enumerate}[(I)]
\item if $\delta(P,\theta_0)>0$ then using equation \eqref{sde3}, \eqref{prelim2}, \eqref{hig1}, \eqref{hig3} and \eqref{hig4} we have
\begin{align*}
\qquad \exp\left( (1-\epsilon) \frac{1}{2}\delta(P,\theta_0)r_q^n\right)&\leq|A_j(z_q)|\\
&\leq \left| \frac{f^{(m)}(z_q)}{f^{(j)}(z_q)}\right|+|A_{m-1}(z_q)|\left|\frac{f^{(m-1)}(z_q)}{f^{(j)}(z_q)}\right| +\ldots\\
&+|A_{j-1}(z_q)|\left|\frac{f^{(j-1)}(z_q)}{f^{(j)}(z_q)}\right|+|A_{j+1}(z_q)|\left|\frac{f^{(j+1)}(z_q)}{f^{(j)}(z_q)}\right|\\
&+\ldots+|A_0(z_q)|\left|\frac{f(z_q)}{f^{(j)}(z_q)}\right|+\left|\frac{H(z_q)}{f^{(j)}(z_q)}\right|\\
&\leq \left| \frac{f^{(m)}(z_q)}{f^{(j)}(z_q)}\right|+|A_{m-1}(z_q)|\left|\frac{f^{(m-1)}(z_q)}{f^{(j)}(z_q)}\right|+\ldots\\
&+\left|\frac{f(z_q)}{f^{(j)}(z_q)}\right|\{|A_{j-1}(z_q)|\left|\frac{f^{(j-1)}(z_q)}{f(z_q)}\right|\\
&+|A_{j+1}(z_q)|\left|\frac{f^{(j+1)}(z_q)}{f(z_q)}\right|
+\ldots+|A_0(z_q)|+\left|\frac{H(z_q)}{f(z_q)}\right|\}\\
&\leq r_q^{m\rho(f)}+|A_{m-1}(z_q)|r_q^{m\rho(f)}+\ldots+r_q^{m\rho(f)}(1+o(1))\\
&\left( |A_{j-1}| r_q^{m\rho(f)}+ |A_{j+1}| r_q^{m\rho(f)}+\ldots+ |A_0(z_q)|+o(1)\right)\\
&\leq r_q^{2m\rho(f)}(1+o(1))\left((m-1) \exp\left( r^\eta\right)+o(1)\right)
\end{align*}
where $\max\{\rho(A_k):k=1,2,\ldots,m-1, k\neq j\}<\rho(A_0)<\eta<\rho(A_j)$. But this gives a contradiction to the fact that $\rho(A_j)>\rho(A_0)$.
\item  When $\delta(P,\theta_0)<0$ or $\delta(P,\theta_0)=0$, then as done in earlier cases, we obtain a contradiction.
\end{enumerate}
Thus all solutions of equation \eqref{sde3} are of infinite order. 
\item  Now, from equation \eqref{sde3} we have 
\begin{equation*}
\frac{1}{f}=-\frac{1}{H}\left( \frac{f^{(m)}}{f}+A_{m-1} \frac{f^{(m-1)}}{f}+\ldots+A_1 \frac{f'}{f}+A_0 \right)
\end{equation*}
As a consequence of lemma of logarithmic derivatives and first fundamental theorem of Nevanlinna theory we have
\begin{align*}
m\left(r,\frac{1}{f}\right)&\leq m\left(r,\frac{f^{(m)}}{f}\right)+\ldots+m\left(r,\frac{f'}{f}\right)+m\left(r,A_{m-1}\right)+\ldots\\
&+ m\left(r,A_1\right)+m\left(r,A_0\right)+m\left(r,\frac{1}{H}\right)\\
&\leq S(r,f)+o(T(r,f))+m(r,H)+O(1)\\
&= S(r,f)+o(T(r,f))+O(1)
\end{align*}
Again applying first fundamental theorem of Nevanlinna theory, we get
\begin{align*}
T(r,f)+O(1)&=m\left(r,\frac{1}{f}\right)+N\left(r,\frac{1}{f}\right)\\
&\leq S(r,f)+N\left(r,\frac{1}{f}\right)+o(T(r,f))+O(1)
\end{align*}
From here, it is easy to conclude that $\lambda(f)=\infty.$
\item Using Lemma \ref{gunlem}, there exists $E\subset(1,\infty)$ satisfying $m_l(E)<\infty$ such that 
\begin{equation}\label{hig7}
\left| \frac{f^{(l)}(z)}{f^{(p)}(z)}\right|\leq c [T(2r,f)]^{2(l-p)}
\end{equation}
where $p<l$ are non-negative integers, $c>0$ is a constant and $z$ satisfies $|z|=r\notin E\cup[0,1]$. Let us suppose that $\rho(A_j)<\rho(A_0)$. Then as in case (a), using equations \eqref{sde3},  \eqref{hig2}, \eqref{hig3} and \eqref{hig7} we get
\begin{align*}
\exp\left(r_q^{\rho(A_0)-\epsilon}\right)&\leq M(r,A_0)\\
&\leq \left|\frac{f^{(m)}(z_q)}{f(z_q)}\right|+|A_{m-1}(z_q)|\left|\frac{f^{(m-1)}(z_q)}{f(z_q)}\right|+\ldots+|A_1(z_q)|\left|\frac{f'(z_q)}{f(z_q)}\right|+\left| \frac{H(z_q)}{f(z_q)}\right|\\
&\leq c[T(2r_q,f)]^{m\rho(f)}\left( 1+|A_{m-1}(z_q)|+\ldots+|A_j(z_q)|+\ldots+|A_1(z_q)|\right)+o(1)\\
&\leq  c[T(2r_q,f)]^{m\rho(f)}\left( 1+(m-1)\exp{r_q^{\eta}}\right)+o(1)
\end{align*}
where $\rho(A_k)<\eta<\rho(A_0)$ for all $k=1,2,\ldots, m-1$. This will imply that $\rho_2(f)\geq \rho(A_0)$.

Now, if $\rho(A_0)<\rho(A_j)$ then as done in case (a), using equations \eqref{sde3}, \eqref{prelim2}, \eqref{hig2}, \eqref{hig3} and \eqref{hig7} we conclude from here that $\rho_2(f)\geq \rho(A_j).$

We know that if $f$ is a solution of equation \eqref{sde3} then 
\begin{equation}\label{hig8}
f(z)=c_1(z)f_1(z)+\ldots+c_m(z)f_m(z)
\end{equation}
where $f_1,\ldots,f_m$ are linearly independent solutions of equation \eqref{sde2} and $c_i'=\frac{HG_i(f_1,f_2,\ldots,f_m)}{W(f_1,f_2,\ldots,f_m)}$ with $G_i(f_1,f_2,\ldots,f_m)$ being a polynomial in $f_1,f_2,\ldots,f_m$ and their derivatives and $W(f_1,f_2,\ldots,f_m)$ being Wronskian of $f_1,f_2,\ldots,f_m$. From equation \eqref{hig8} we obtain
\begin{equation}\label{hig9}
T(r,f)\leq d_1T(r,f_1)+d_2T(r,f_2)+\ldots+d_mT(r,f_m)+dT(r,H)+O(1)
\end{equation}
where $d,d_1,d_2,\ldots,d_m$ are positive integers. From equation \eqref{hig9} and Lemma \ref{asslem} we conclude that $\rho_2(f)\leq \rho=\max\{\rho(A_0),\rho(A_j)\}$.
\item For every complex number $c$, we know that $f\equiv c$ is not a solution of \eqref{sde3} therefore, using Proposition \ref{mohon'ko} and Lemma \ref{olem} we have
\begin{equation*}
m\left(r,\frac{1}{f-c}\right)=S(r,f)
\end{equation*}
for $r\in S$. Thus 
$$\delta(c,f)=\underline{\lim}_{r\to \infty}\frac{m\left(r,\frac{1}{f-c}\right)}{T(r,f)}=0.$$
Therefore, $f$ has no finite deficient value.
\end{enumerate}
\end{proof}

\end{document}